\newtheorem{theorem}{Theorem}[section]
\newtheorem{proposition}[theorem]{Proposition}
\newtheorem{lemma}[theorem]{Lemma}
\theoremstyle{definition}
\newtheorem{definition}[theorem]{Definition}
\newtheorem{example}[theorem]{Example}
\newtheorem{remark}[theorem]{Remark}
\def\ord{\mathop{\operatorfont ord}\nolimits}
\def\diag{\mathop{\rm diag}\nolimits}
\newcommand{\bc}{{\mathbf{c}}}
\begin{document}

\title{Minimum distance of Hermitian two-point codes}

\author {Seungkook Park\footnotemark[1]}
%{Iwan M. Duursma\footnotemark[1]\  %\footnotemark[4]
%~and~ Seung Kook Park\footnotemark[2]} %\footnotemark[4]}

\date{}

\renewcommand{\thefootnote}{\fnsymbol{footnote}}
%\footnotetext[1]{Department of Mathematics, University of Illinois
%at Urbana-Champaign (duursma@math.uiuc.edu)}
\footnotetext[1]{Department of Mathematical Sciences, University of
Cincinnati (seung-kook.park@uc.edu)}
\renewcommand{\thefootnote}{\arabic{footnote}}

\maketitle

\begin{abstract}
We prove a formula for the minimum distance of two-point codes on a
Hermitian curve.
\end{abstract}

%==========================================================================
\section{Introduction}
%==========================================================================
Homma and Kim \cite{HommaKim4},\cite{HommaKim1},\cite{HommaKim3},\cite{HommaKim2}
gave a complete determination of the minimum distance of two-point
codes on a Hermitian curve. Their method is based on the method of
Kumar and Yang\cite{KumarYang}. It leads to a theorem with many distinct cases and a
long proof divided over four papers. The objective of this paper is to give a short and
easy proof of the minimum distance of the Hemitian two-points codes using a method based on
Kirfel and Pellikaan \cite{kirfelPellikaan}. First we use the shift
bound to find the lower bound for the minimum distance. Secondly we use certain types of conics and
lines to show that the bound is sharp. Kirfel and Pellikaan gave a general method of finding a lower bound for the minimum distance of one-point codes based on the decoding algorithms by Feng and Rao \cite{FengRao}, and Duursma \cite{Majoritycoset}. The method gives a short proof for the minimum distance of the Hermitian one-point codes. There have been approaches using Kirfel and Pellikaan's method to find the
lower bound for the minimum distance of general algebraic geometric codes. In \cite{norder}, near order functions are used to find the lower bound for the minimum distance of
a two-point algebraic geometric code. Beelen \cite{Beelen} gave a
method of finding the lower bound for the minimum distance of
general algebraic geometric codes using the generalized order bound.
In fact, the lower bound gives the exact minimum distance of the
Hermitian two-point codes in a large range. We define the multiplicity of the Hermitian
two-point code and use the multiplicity to find a path that will give the minimum distance
of the Hermitian two-point code. Our formulas for the minimum distance of the Hermitian two-point code are given for all ranges and they meet the formulas by Homma and Kim with a shorter proof and fewer cases for the formulas. Moreover, our approach can be used in majority coset
decoding \cite{Majoritycoset} which decodes up to half the actual minimum distance. \\

 In Section 2 we give the definition of multiplicity and a method
  of finding the lower bounds for the minimum distance
of Hermitian two-point codes. In Section 3 we state the formulas for
the multiplicity and minimum distance of the Hermitian two-point
codes. The proofs of the formulas for the multiplicity and minimum
distance of the Hermitian two-point codes are given in Section 4 and
Section 5, respectively. In Appendix, we state both
formulas for the minimum distance of Hermitian two-point codes given
by Homma and Kim
\cite{HommaKim4},\cite{HommaKim1},\cite{HommaKim3},\cite{HommaKim2}
and given in this paper for comparison.

%==========================================================================
\section{Multiplicities and the shift bound}\label{multishift}
%==========================================================================

We give the definition of multiplicity and a method of finding the
lower bound for the minimum distance of the Hermitian two-point code.\\

Let $X$ be a Hermitian curve defined by $y^q+y=x^{q+1}$ over
$\mathbb{F}_{q^2}$. Then $X$ has $q^3+1$ rational points and the
genus is $q(q-1)/2$. Let $P_\infty$ be the point at infinity of $X$
and $P_0$ the origin of $X$. The canonical divisor $K$ of a
Hermitian curve is $K=(q-2)H$, where $H \sim (q+1)P_\infty \sim
(q+1)P_0$. Let $\mathbb{F}_{q^2}(X)$ be the function field of $X$
over $\mathbb{F}_{q^2}$. For $f\in \mathbb{F}_{q^2}(X)\backslash
\{0\}$, $(f)_\infty$ denotes the pole divisor of $f$, $(f)_0$ the
zero divisor of $f$ and $(f)=(f)_0-(f)_\infty$ the divisor of $f$.
Given a divisor $G$ on $X$ defined over $\mathbb{F}_{q^2}$, let
$L(G)$ denote the vector space over $\mathbb{F}_{q^2}$ consisting of
functions $f\in \mathbb{F}_{q^2}(X)\backslash\{0\}$ with $(f)+G\geq
0$ and the zero function.
%Let $l(G)$ denote the dimension of $L(G)$ as an $\mathbb{F}_{q^2}$-vector space.
Let $G=aP_\infty+bP_0$ and $D=P_1+\cdots +P_n$ be a divisor of $X$,
where $\operatorname{supp}(G)\cap \operatorname{supp}(D)=\emptyset$
and $P_1,\ldots, P_n$ are pairwise distinct. We define a code
$C(D,G)$ as the image of the evaluation map $\rm{ev}:L(G)\rightarrow
\mathbb{F}_{q^2}^n$ given by $\rm{ev}(f)=(f(P_1),\ldots, f(P_n))$
for all $f\in L(G)$.
% Let $G^\perp$ be the divisor such that
%\begin{equation*}
%C(D,G)^\perp = C(D,G^\perp).
%\end{equation*}
%We call $G^\perp$ the dual divisor of $G$. Let $K$ be a canonical
%divisor of $X$. Let $N$ be the sum of all the rational points of $X$.
%Then $N \sim (q^3+1)P_\infty$.
%Using the fact that $G + G^\perp \sim K + N$, it is easy to compute
%the dual divisor of $G$.
For a fixed $D$, we will use the notation $C(G)=C(a,b)$ for
$C(D,G)$, where $G=aP_\infty+bP_0$. \\

\begin{definition}
Let $M_{P_\infty}(a,b)$ be the set of pairs $(f,g)$ of rational
functions such that
\begin{itemize}
\item[(1)] $fg \in L(aP_\infty+bP_0)\backslash L((a-1)P_\infty+bP_0)$
\item[(2)] $f \in L(aP_\infty+bP_0)$
\item[(3)] $g \in L((a+b)P_\infty)$
\end{itemize}
The multiplicity $m_{P_\infty}(a,b)$ is defined as
\[
m_{P_\infty}(a,b) = \# \{ -b \leq i \leq a+1 : \exists (f,g) \in
M_{P_\infty}(a+1,b) \text{ with } \ord_{P_\infty}(f) = -i \}.
\]
\end{definition}
\vspace{0.8cm}

 We apply the shift bound argument from \cite
{vanlintwilson}, see also \cite{Pellikaan}, to obtain a lower bound for the weight of a vector
that is orthogonal
to $C(a,b)$ but not orthogonal to $C(a+1,b)$ in terms of the multiplicity. \\

\begin{theorem}\label{shift}
Let $\bc = (c_1,\ldots,c_n) \in \mathbb{F}_{q^2}^n$ be a vector that
is orthogonal to $C(a,b)$ but not orthogonal to $C(a+1,b)$. Then the
weight of $\bc$ is at least $m_{P_\infty}(a,b)$.
\end{theorem}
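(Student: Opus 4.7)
The plan is to mimic the classical shift bound argument of \cite{vanlintwilson,Pellikaan}: build an $|I| \times |I|$ lower-triangular matrix $A$ with nonzero diagonal, where $I$ is the index set of size $m_{P_\infty}(a,b)$, and then factor $A$ through $\diag(\bc_S)$ (with $S = \operatorname{supp}(\bc)$) so that $\rank(A) \leq |S|$. Since $\bc \perp C(a,b)$ but $\bc \not\perp C(a+1,b)$, the codes differ and we can pick $h \in L((a+1)P_\infty + bP_0)$ with $\langle \bc, \mathrm{ev}(h) \rangle \neq 0$; then $\ord_{P_\infty}(h) = -(a+1)$, and the standard fact that adjoining one point raises the $L$-space dimension by at most one shows that $h$ spans the one-dimensional quotient $L((a+1)P_\infty + bP_0) / L(aP_\infty + bP_0)$. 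Write $I = \{-b \leq i \leq a+1 : \exists (f,g) \in M_{P_\infty}(a+1,b), \ \ord_{P_\infty}(f) = -i\}$ and for each $i \in I$ fix a pair $(f_i, g_i)$; condition~(1) forces $\ord_{P_\infty}(f_i g_i) = -(a+1)$, hence $\ord_{P_\infty}(g_i) = -(a+1-i)$.

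Define the $|I| \times |I|$ matrix $A$, indexed by $I$ in increasing order, by $A_{ik} = \langle \bc, \mathrm{ev}(f_i g_k) \rangle$. The key order estimate is: for $i < k$ in $I$, $\ord_{P_\infty}(f_i g_k) = -(a+1+i-k) \geq -a$, and since condition~(3) gives $g_k \in L((a+1+b)P_\infty)$, $g_k$ has no pole at $P_0$, so the $P_0$-pole of $f_i g_k$ is inherited from $f_i$ alone and is bounded by $b$. Therefore $f_i g_k \in L(aP_\infty + bP_0)$, and $A_{ik} = 0$ for $i < k$. On the diagonal, write $f_i g_i = \lambda_i h + r_i$ with $\lambda_i \in \mathbb{F}_{q^2}^{\times}$ and $r_i \in L(aP_\infty + bP_0)$; then $A_{ii} = \lambda_i \langle \bc, \mathrm{ev}(h) \rangle \neq 0$. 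Thus $A$ is lower triangular with nonzero diagonal, and $\rank(A) = |I|$.

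Finally, the identity $A_{ik} = \sum_{j \in S} c_j f_i(P_j) g_k(P_j)$ gives the factorization $A = F \cdot \diag(\bc_S) \cdot G^{\top}$, with $F_{ij} = f_i(P_j)$ and $G_{kj} = g_k(P_j)$ for $j \in S$, so that $m_{P_\infty}(a,b) = |I| = \rank(A) \leq |S| = \wt(\bc)$. The main obstacle is the simultaneous control of the pole orders at both $P_\infty$ and $P_0$ in the products $f_i g_k$; condition~(3) is indispensable here, because it confines the poles of $g_k$ to $P_\infty$ and thereby keeps the $P_0$-pole of $f_i g_k$ within the allowed bound $b$ dictated by $L(aP_\infty + bP_0)$.
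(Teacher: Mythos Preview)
Your proof is correct and follows essentially the same shift-bound argument as the paper: both select pairs $(f_i,g_i)\in M_{P_\infty}(a+1,b)$ with distinct pole orders of the $f_i$ at $P_\infty$, form the matrix of pairings $\langle \bc,\mathrm{ev}(f_i g_k)\rangle$, verify it is triangular with nonzero diagonal (using that $g_k\in L((a+b+1)P_\infty)$ controls the $P_0$-pole), and factor it through $\diag(\bc)$ to bound $\wt(\bc)$ below by its rank. The only cosmetic difference is that the paper builds the matrix as $A\,\diag(c_1,\dots,c_n)\,B^{T}$ from full $m\times n$ evaluation matrices, whereas you restrict to the support of~$\bc$ and write the pairing matrix directly.
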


\begin{proof}
Let $m = m_{P_\infty}(a,b)$ and let $ (f_1,g_1), \ldots, (f_m,g_m)$
be pairs in $M_{P_\infty}(a+1,b)$ such that $f_1, f_2, \ldots, f_m$
have distinct pole orders at $P_\infty.$ Let
 $\ord_{P_\infty}(f_m) < \cdots < \ord_{P_\infty}(f_1)$. Then
\begin{align*}
& g_jf_i \in L(G)~~\text{ for}~~ i=1,\ldots ,j-1.\\
& g_jf_i \in L(G+P_{\infty})\backslash L(G)~~\text{ for}~~ i=j.\\
& g_jf_i \not \in L(G+P_{\infty})~~\text{ for}~~ i=j+1,\ldots ,m.
\end{align*}
Let $A$ be the $m \times n$ matrix with entries $g_i(P_j)$ and let
$B$ be the $m \times n$ matrix with entries $f_i(P_j)$. The $m
\times m$ matrix $A \diag(c_1,\ldots,c_n) B^T$ is zero below the
diagonal and nonzero on the diagonal. Hence it is of rank $m$ and
the number of nonzero coordinates in $\bc$ is at least $m$.
\end{proof}

\begin{definition}
Let $M_{P_0}(a,b)$ be the set of pairs $(f,g)$ of rational functions
such that
\begin{itemize}
\item[(1)] $fg \in L(aP_\infty+bP_0)\backslash L(aP_\infty+(b-1)P_0)$
\item[(2)] $f \in L(aP_\infty+bP_0)$
\item[(3)] $g \in L((a+b)P_0)$
\end{itemize}
The multiplicity $m_{P_0}(a,b)$ is defined as
\[
m_{P_0}(a,b) = \# \{ -a \leq j \leq b+1 : \exists (f,g) \in
M_{P_0}(a,b+1) \text{ with } \ord_{P_0}(f) = -j \}.
\]
\end{definition}

\begin{theorem}
Let $\bc = (c_1,\ldots,c_n) \in \mathbb{F}_{q^2}^n$ be a vector that
is orthogonal to $C(a,b)$ but not orthogonal to $C(a,b+1)$. Then the
weight of $\bc$ is at least $m_{P_0}(a,b)$.
\end{theorem}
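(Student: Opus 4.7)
The plan is to mirror the proof of Theorem \ref{shift} almost verbatim, exchanging the roles of $P_\infty$ and $P_0$. Set $m = m_{P_0}(a,b)$ and select pairs $(f_1,g_1),\dots,(f_m,g_m) \in M_{P_0}(a,b+1)$ realizing $m$ distinct values of $\ord_{P_0}(f_i)$; order them so that $\ord_{P_0}(f_m) < \cdots < \ord_{P_0}(f_1)$. Then I would form the $m \times n$ matrices $A = (g_i(P_j))$ and $B = (f_i(P_j))$ and study the $m \times m$ matrix $A\,\diag(c_1,\dots,c_n)\,B^{T}$ exactly as in the proof of Theorem~\ref{shift}, showing it is upper triangular with nonzero diagonal, hence of rank $m$, which forces $\wt(\bc) \geq m$.

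The key step is the triangular structure, and this reduces to a pole-order computation at $P_0$. By condition (1) of the definition of $M_{P_0}(a,b+1)$, the product $f_jg_j$ has pole order exactly $b+1$ at $P_0$, so $\ord_{P_0}(g_j) = -(b+1) - \ord_{P_0}(f_j)$. Since $g_j$ has no pole at $P_\infty$ by condition (3), one obtains
\[
\ord_{P_0}(g_j f_i) = -(b+1) + \bigl(\ord_{P_0}(f_i) - \ord_{P_0}(f_j)\bigr), \qquad \ord_{P_\infty}(g_j f_i) = \ord_{P_\infty}(f_i) \geq -a.
\]
For $i < j$ this gives $g_j f_i \in L(G)$; for $i = j$ it gives $g_j f_j \in L(G + P_0) \setminus L(G)$; for $i > j$ the pole at $P_0$ is too large, so $g_j f_i \notin L(G + P_0)$. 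Consequently the $(j,i)$-entry $\sum_k c_k\, (g_j f_i)(P_k)$ of $A\,\diag(\bc)\,B^T$ vanishes for $i<j$ by orthogonality of $\bc$ to $C(a,b)$.

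The only point deserving extra care is the nonvanishing of the diagonal entries. Here I would use that $\dim L(G+P_0) - \dim L(G) \in \{0,1\}$, and since $M_{P_0}(a,b+1)$ is nonempty the difference is $1$, so $L(G+P_0)/L(G)$ is one-dimensional. Because $\bc$ is orthogonal to $C(a,b)$ but not to $C(a,b+1)$, the linear functional $h \mapsto \sum_k c_k h(P_k)$ is nonzero on this quotient, so it takes a nonzero value on any representative; in particular on $g_j f_j$. This shows the diagonal entries are all nonzero, and the upper-triangular matrix has full rank $m$, completing the bound.

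I do not foresee a substantive obstacle: the calculation is parallel to the $P_\infty$ case, and the only book-keeping issue is getting the signs and inequalities right when translating between orders at $P_0$ and the index set $-a \leq j \leq b+1$ used to define $m_{P_0}(a,b)$. The argument is entirely symmetric in $(P_\infty, a) \leftrightarrow (P_0, b)$, so I would present it as a brief remark that the proof is identical to that of Theorem~\ref{shift} with the roles of the two points interchanged, perhaps recording only the pole-order computation above for completeness.
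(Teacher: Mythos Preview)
Your proposal is correct and follows exactly the approach the paper intends: the paper's own proof consists of the single sentence ``The proof is similar to Theorem~\ref{shift},'' and what you have written is precisely that symmetric argument with the roles of $P_\infty$ and $P_0$ interchanged. One tiny slip: you write $\ord_{P_\infty}(g_j f_i) = \ord_{P_\infty}(f_i)$, but since $g_j$ may have a zero at $P_\infty$ you only get $\ord_{P_\infty}(g_j f_i) \geq \ord_{P_\infty}(f_i) \geq -a$, which is all that is needed.
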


\begin{proof}
The proof is similar to Theorem \ref{shift}.
\end{proof}

We present a method to find a lower bound for the minimum distance
of the Hermitian two-point codes. Let $0 \neq c \in C(a,b)^\perp$.
We want to find a lower bound for the weight of a word $c \neq 0$
which is orthogonal to $C(a,b)$. Consider Figure \ref{path}.\\

\begin{figure}[!ht]
\centering
 \psfrag{c1}{$C(a,b)$} \psfrag{l1}{$m_{P_\infty}(a,b)$}
 \psfrag{r1}{$m_{P_0}(a,b)$}
\psfrag{cl1}{$C(a+1,b)$}
 \psfrag{cr1}{$C(a,b+1)$}
\psfrag{cl2}{$C(a+2,b)$}
 \psfrag{c2}{$C(a+1,b+1)$}
\psfrag{cr2}{$C(a,b+2)$}
 \psfrag{f}{$\mathbb{F}_q^n$}
 \psfrag{f1}{$C(a+l_1,b)=\mathbb{F}_q^n$}
 \psfrag{f2}{$\mathbb{F}_q^n=C(a,b+l_2)$}
\includegraphics[scale=1]{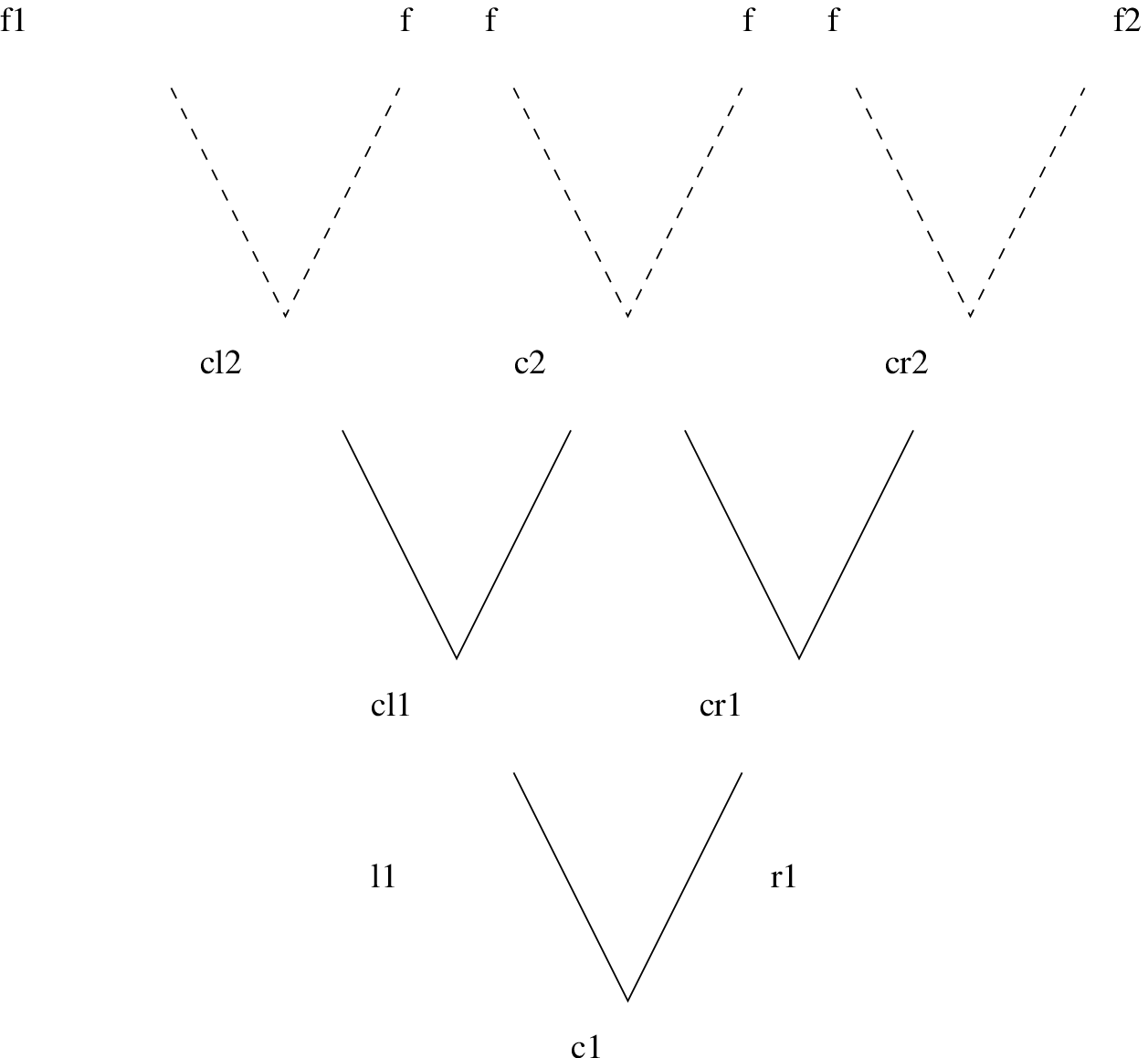}
\caption{Lower bound for the minimum distance}\label{path}
\end{figure}

We first consider the vector spaces
\[C(a,b) \subseteq C(a+1,b) \subseteq \cdots
\subseteq \mathbb{F}_q^n.
\]
The weight for the words $c$ that are orthogonal to $C(a,b)$ but not
orthogonal to $C(a+1,b)$ is at least $m_{P_\infty}(a,b)$. The weight
for the words $c$ that are orthogonal to $C(a+1,b)$ but not
orthogonal to $C(a+2,b)$ is at least $m_{P_\infty}(a+1,b)$. Since
$c$ is nonzero, there is a vector space not equal to the full space
$\mathbb{F}_q^n$ , say $C(a+i,b)$, such that the word $c$ is
orthogonal to $C(a+i,b)$ but not orthogonal to $C(a+i+1,b)$. The
weight of the word $c$ that is orthogonal to $C(a,b)$ but not
orthogonal to $\mathbb{F}_q^n$ is at least the minimum of the
multiplicities $m_{P_\infty}(a+j,b)$, where $j = 0,1,\ldots, l_1$
and $C(a+l_1,b)=\mathbb{F}_q^n$. Now, we fix $a$ and increase $b$.
The weight for the words $c$ that are orthogonal to $C(a,b)$ but not
orthogonal to $C(a,b+1)$ is at least $m_{P_0}(a,b)$. The weight for
the words $c$ that are orthogonal to $C(a,b+1)$ but not orthogonal
to $C(a,b+2)$ is at least $m_{P_0}(a,b+1)$. By a similar argument as
above, the weight of the word $c$ that is orthogonal to $C(a,b)$ but
not orthogonal to $\mathbb{F}_q^n$ is at least the minimum of the
multiplicities $m_{P_0}(a,b+j)$, where $j = 0,1,\ldots, l_2$ and
$C(a,b+l_2)=\mathbb{F}_q^n$. We increase the divisors by increasing
the pole order of $P_0$ or $P_\infty$ by 1, and compute the
$m_{P_0}(a+1,b),~m_{P_\infty}(a+1,b), m_{P_0}(a,b+1),\text{ and}
~~m_{P_\infty}(a,b+1)$. We apply the same process until the
Riemann-Roch space of the divisor becomes the full space
$\mathbb{F}_q^n$. In each step, we can make a choice for the divisor
by adding $P_0$ or $P_\infty$, that is, we can choose a path to the
full space $\mathbb{F}_q^n$. For each path $P$, we take the minimum
of the multiplicities along the path and denote it by $\min(P)$. Let
$S$ be the set of $\min(P)$ for all the paths. Each element of the
set $S$ gives a lower bound for the weight of $c$ with $0 \neq c
\perp C(a,b)$. The best lower bound for the weight of $c$ is
obtained by taking the maximum of the set $S$. This maximum is a
lower bound for the minimum distance of $C(a,b)^\perp$.\\
%======================================================================
\section{Formulas for multiplicity and minimum distance}\label{Hermitian}
%======================================================================
We state the formulas for the multiplicity and minimum distance of
the Hermitian two-point codes. The formulas give the minimum distance of the
Hermitian two-point codes for all ranges of $G$. We divide the ranges into two parts as follows:
\begin{itemize}
\item[$1$.]$\{G:\rm{deg} G > \rm{deg} K + q\} ~\cup ~\{G:\rm{deg} K \leq \rm{deg} G \leq \rm{deg} K+q ~\wedge G \nsim sP_\infty ~\wedge G \nsim tP_0\}$ and \\
\item[$2$.]$\{G:\rm{deg} G < \rm{deg} K\} ~\cup~ \{G:(\rm{deg} K \leq \rm{deg} G \leq \rm{deg} K+q) ~\wedge~ (G \sim sP_\infty ~\vee~ G \sim tP_0)\}$, where $s,t \in \mathbb{Z}$.
\end{itemize}
Theorem \ref{thm:dist} and Theorem \ref{thm:below} give the formulas
for the minimum distance for the first part and the second part, respectively.
%------------------------------------------------------------
\begin{proposition}\label{prop:mult}
Let $G = K+aP_\infty+bP_0$, where $K$ is a canonical divisor,
\begin{eqnarray*}
a&=& a_0 (q+1)-a_1,~~~~0\leq a_1\leq q,\\
b&=& b_0 (q+1)-b_1,~~~~0\leq b_1\leq q.
\end{eqnarray*}
\begin{itemize}
\item[$1$.] If $a_1< a_0+b_0$, then \\
$m_{P_\infty}(2g-2+a,b)$ = $(a_0+b_0-a_1)(q+1)-b_1+a_1q$
\item[$2$.] If $a_0+b_0\leq a_1 \leq a_0+b_0+q-1$, then \\
$m_{P_\infty}(2g-2+a,b)$ = $a_1(q+a_0+b_0-a_1)-\min\{a_1,b_1\}$.
\item[$3$.] If $a_0+b_0+q-1<a_1$, then \\
$m_{P_\infty}(2g-2+a,b)$ = $0$.
\item[$1^{'}$.] If $b_1< a_0+b_0$, then \\
 $ m_{P_0}(2g-2+a,b)$ = $(a_0+b_0-b_1)(q+1)-a_1+b_1q$
\item[$2^{'}$.] If $a_0+b_0\leq b_1 \leq a_0+b_0+q-1$, then \\
 $ m_{P_0}(2g-2+a,b)$ = $b_1(q+a_0+b_0-b_1)-\min\{a_1,b_1\}$.
\item[$3^{'}$.] If $a_0+b_0+q-1<b_1$, then \\
$m_{P_0}(2g-2+a,b)$ = $0$.
\end{itemize}
\end{proposition}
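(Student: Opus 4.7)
The plan is to work directly from the definition of $m_{P_\infty}(a',b)$ with $a' = 2g-2+a$, counting the valid integers $i \in [-b, a'+1]$ as $a_1$ varies. First I would decouple the two conditions implicit in $M_{P_\infty}(a'+1,b)$. The requirement $fg \in L((a'+1)P_\infty + bP_0) \setminus L(a'P_\infty + bP_0)$ forces $\ord_{P_\infty}(fg) = -(a'+1)$, hence $\ord_{P_\infty}(g) = -(a'+1-i)$. Because $g \in L((a'+1+b)P_\infty)$ has no poles outside $P_\infty$ and any nonzero function regular on all of $X$ is constant, we get $\ord_{P_\infty}(g) \le 0$, so the existence of $g$ reduces to $a'+1-i \in \langle q, q+1 \rangle$, the one-point Weierstrass semigroup at $P_\infty$. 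The existence of $f$ reduces to the dimension jump $\dim L(iP_\infty+bP_0) > \dim L((i-1)P_\infty + bP_0)$. Thus $m_{P_\infty}(a',b)$ counts the $i \in [-b,a'+1]$ satisfying both (A) this dimension jump and (B) $a'+1-i \in \langle q, q+1 \rangle$.

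Next I would make (A) and (B) explicit via the Hermitian function theory. With $(y) = (q+1)P_0 - (q+1)P_\infty$, $\ord_{P_\infty}(x) = -q$, and $\ord_{P_0}(x) = 1$, the monomials $x^s y^t$ ($s \ge 0,\ t \in \ZZ$) span the space of functions on $X$ regular outside $\{P_\infty, P_0\}$, with $\ord_{P_\infty}(x^s y^t) = -(sq + t(q+1))$ and $\ord_{P_0}(x^s y^t) = s + t(q+1)$. Since $\gcd(q,q+1) = 1$, every integer is realized as $-\ord_{P_\infty}$ of some monomial, and for a given $i$ the representation with $0 \le s \le q$ and $s \equiv -i \pmod{q+1}$ minimizes $|\ord_{P_0}|$; a short Riemann--Roch argument shows no linear combination improves on this. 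Hence (A) becomes a concrete linear inequality in $s,b_0,b_1$ after writing $i = k(q+1) - s$. In parallel, (B) rewrites, via the normal form $a'+1-i = uq + v(q+1)$ with $0 \le v \le q-1$, as explicit bounds on $i$ expressed through $a_0, a_1$.

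Combining the two explicit conditions reduces $m_{P_\infty}$ to a combinatorial count that splits by the position of $a_1$ relative to $a_0+b_0$ and $a_0+b_0+q-1$: in the low regime ($a_1 < a_0+b_0$) every admissible $i$ survives, giving the count $(a_0+b_0)(q+1) - a_1 - b_1 = (a_0+b_0-a_1)(q+1) - b_1 + a_1 q$; in the high regime ($a_1 > a_0+b_0+q-1$) the conditions are incompatible and the count is $0$; the intermediate regime produces the formula $a_1(q+a_0+b_0-a_1) - \min\{a_1,b_1\}$, a quadratic in $a_1$ with a $-\min\{a_1,b_1\}$ correction. Parts $(1')$--$(3')$ for $m_{P_0}$ follow from the $P_\infty \leftrightarrow P_0$ symmetry of the Hermitian curve, which swaps $(a,b)$ and $(a_0,a_1) \leftrightarrow (b_0,b_1)$. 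The main obstacle I anticipate is the intermediate regime: the correction term $-\min\{a_1,b_1\}$ encodes a subtle competition between the gap pattern of $\langle q, q+1 \rangle$ (entering through (B)) and the $\ord_{P_0} \ge -b$ cutoff (entering through (A)); pinning it down requires identifying the handful of boundary $i$'s that just fail (A), and ruling out that \emph{any} $f$ (not merely the distinguished monomial $x^s y^{k-s}$) realizes them, via a Riemann--Roch dimension check on the spaces $L(iP_\infty+bP_0)$ and $L((i-1)P_\infty+bP_0)$.
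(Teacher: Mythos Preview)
Your proposal is correct and is essentially the paper's own argument, reorganized. The paper first applies the equivalence $(q+1)P_0\sim(q+1)P_\infty$ to rewrite $L(K+aP_\infty+bP_0)$ as $L((q{-}2{+}a_0{+}b_0)(q{+}1)P_\infty-a_1P_\infty-b_1P_0)$ and then counts admissible leading monomials $x^{i_1}y^{j_1}$ for $f$ (splitting according to whether the product monomial has $x$-exponent $a_1{-}1$ or $q{+}a_1$); your decoupling into the Weierstrass nongap condition (B) on $g$ and the dimension-jump condition (A) on $f$, carried out with the two-point monomials $x^{s}y^{t}$ ($t\in\ZZ$), is the same lattice-point count seen before that linear equivalence, and your ``optimal monomial'' claim needs no Riemann--Roch since the monomials with $0\le s\le q$ have pairwise distinct valuations at both $P_\infty$ and $P_0$.
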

\begin{proof}
The proof is given in Section \ref{sec:pfmult}
\end{proof}
%--------------------------------------------------------------------------
\begin{theorem}\label{thm:mult}
Let $G = K+aP_\infty+bP_0$, where $K$ is a canonical divisor,
\begin{eqnarray*}
a&=& a_0 (q+1)-a_1,~~~~0\leq a_1\leq q,\\
b&=& b_0 (q+1)-b_1,~~~~0\leq b_1\leq q.
\end{eqnarray*}
Let $d^*=\deg(G) -(2g-2)=a+b$.
\begin{itemize}
\item[$1$.] If $a_1< a_0+b_0$, then \\
$m_{P_\infty}(2g-2+a,b)$ = $d^*$
\item[$2$.] If $a_0+b_0\leq a_1 \leq a_0+b_0+q-1$, then \\
$m_{P_\infty}(2g-2+a,b)$ =
$d^*+(a_1-a_0-b_0)(q+1-a_1)+\max\{0,b_1-a_1\}$.
\item[$3$.] If $a_0+b_0+q-1<a_1$, then \\
$m_{P_\infty}(2g-2+a,b)$ = $0$.
\item[$1^{'}$.] If $b_1< a_0+b_0$, then \\
 $ m_{P_0}(2g-2+a,b)$ = $d^*$
\item[$2^{'}$.] If $a_0+b_0\leq b_1 \leq a_0+b_0+q-1$, then \\
 $ m_{P_0}(2g-2+a,b)$ = $d^*+(b_1-a_0-b)(q+1-b_1)+\max\{0,a_1-b_1\}$.
\item[$3^{'}$.] If $a_0+b_0+q-1<b_1$, then \\
$m_{P_0}(2g-2+a,b)$ = $0$.
\end{itemize}
\end{theorem}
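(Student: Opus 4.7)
The plan is to recognize that Theorem \ref{thm:mult} is purely a rewriting of Proposition \ref{prop:mult}; the real content (which divisors of functions realize which pole orders) is established in Proposition \ref{prop:mult}, so here I only need to verify the algebraic identities case by case. By symmetry (exchange of the roles of $a$ and $b$, and of $a_1$ and $b_1$), it suffices to verify the three cases for $m_{P_\infty}$; the three primed cases follow verbatim. Case 3 is immediate: both formulas give $0$ under the same hypothesis $a_0+b_0+q-1 < a_1$, so Proposition \ref{prop:mult} can be invoked directly.

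For Case 1, I would use the identity
\[
d^* = a+b = (a_0+b_0)(q+1) - a_1 - b_1,
\]
and expand the Proposition \ref{prop:mult} expression
\[
(a_0+b_0-a_1)(q+1) - b_1 + a_1 q = (a_0+b_0)(q+1) - a_1(q+1) + a_1 q - b_1 = (a_0+b_0)(q+1) - a_1 - b_1,
\]
which equals $d^*$. So Case 1 of Theorem \ref{thm:mult} follows from Case 1 of Proposition \ref{prop:mult}.

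For Case 2, the step I expect to be slightly fiddly is reconciling the $\min\{a_1,b_1\}$ in the Proposition with the $\max\{0,b_1-a_1\}$ in the Theorem, but this is just the elementary identity
\[
-\min\{a_1,b_1\} = -b_1 + \max\{0, b_1 - a_1\}.
\]
Setting $s = a_0 + b_0$ and expanding,
\[
d^* + (a_1 - s)(q+1-a_1) + \max\{0,b_1-a_1\}
\]
equals
\[
s(q+1) - a_1 - b_1 + a_1(q+1) - a_1^2 - s(q+1) + sa_1 + \max\{0,b_1-a_1\},
\]
which simplifies to $a_1(q + s - a_1) - b_1 + \max\{0,b_1-a_1\}$, and then via the $\min/\max$ identity to $a_1(q + a_0 + b_0 - a_1) - \min\{a_1,b_1\}$, matching Case 2 of Proposition \ref{prop:mult}.

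The $P_0$ formulas are obtained by interchanging $(a,a_0,a_1)$ with $(b,b_0,b_1)$ throughout, which is a symmetry of both the hypothesis and the conclusion; there is no separate work to do. The only real obstacle is bookkeeping — making sure the cases in the Proposition and the Theorem are triggered by exactly the same conditions on $a_1$ (respectively $b_1$) and that the $\min\{a_1,b_1\}$ versus $\max\{0,b_1-a_1\}$ switch is handled correctly. All the geometric content — construction of the pairs $(f,g) \in M_{P_\infty}(a+1,b)$ realizing the counted pole orders — lives in the proof of Proposition \ref{prop:mult} in Section \ref{sec:pfmult} and is not revisited here.
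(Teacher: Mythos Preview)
Your proposal is correct and matches the paper's own approach exactly: the paper's proof of Theorem \ref{thm:mult} consists of the single sentence ``Follows from Proposition \ref{prop:mult},'' and you have simply written out the algebraic verification that the two sets of formulas agree case by case. Your bookkeeping (the $d^*$ expansion in Case~1, the $\min/\max$ identity in Case~2, and the symmetry reduction for the primed cases) is correct.
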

\begin{proof}
Follows from Proposition \ref{prop:mult}.
\end{proof}
%For the lower bound of the minimum distance, we consider cases $G
%\leq K+qP_\infty, ~~G\leq K+qP_0$ and $G \geq K+P_\infty+P_0$, where
%$K$ is a canonical divisor.
%------------------------------------------------------------
\begin{theorem}\label{thm:dist}
Suppose that $G$ satisfies either
\begin{itemize}
\item[$(1)$]$\rm{deg} G > \rm{deg} K + q$ or
\item[$(2)$]$\rm{deg} K \leq \rm{deg} G \leq \rm{deg} K+q ~\text{and}~ G \nsim sP_\infty ~\text{and} ~G \nsim tP_0$ for all $s,t \in \mathbb{Z}$.
\end{itemize}
Let $G = K+aP_\infty+bP_0$, where $K$ is a
canonical divisor,
\begin{eqnarray*}
a&=& a_0 (q+1)-a_1,~~~~0\leq a_1\leq q,\\
b&=& b_0 (q+1)-b_1,~~~~0\leq b_1\leq q.
\end{eqnarray*}
Let $d^*=\deg(G) -(2g-2)=a+b$.
\begin{itemize}
\item[$1$.] If $0\leq a_1,b_1\leq a_0+b_0$, then\\
$d(C(D,G)^\perp)=d^*$.
\item[$2$.] If $0\leq b_1\leq a_0+b_0<a_1$, then\\
$d(C(D,G)^\perp)=d^*+a_1-(a_0+b_0)$.
\item[$2^{'}$.] If $0\leq a_1\leq a_0+b_0<b_1$, then\\
$d(C(D,G)^\perp)=d^*+b_1-(a_0+b_0)$.
\item[$3$.] If $a_0+b_0<a_1 \leq b_1<q$, then\\
$d(C(D,G)^\perp)=d^*+a_1+b_1-2(a_0+b_0)$.
\item[$3^{'}$.] If $a_0+b_0<b_1\leq a_1<q$, then\\
$d(C(D,G)^\perp)=d^*+a_1+b_1-2(a_0+b_0)$.
\item[$4$.] If $a_0+b_0<a_1,b_1$ and $a_1=q$, $b_1=q$, then\\
$d(C(D,G)^\perp)=d^*+q-(a_0+b_0)$.
\end{itemize}
\end{theorem}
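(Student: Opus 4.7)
The plan is to bound the minimum distance from both sides. The lower bound $d(C(D,G)^\perp) \geq d^* + \delta$ (where $\delta$ is the case-dependent correction read off from the statement) will follow from the shift-bound path method of Section \ref{multishift}, and the matching upper bound $d(C(D,G)^\perp) \leq d^* + \delta$ will follow from exhibiting an explicit low-weight dual codeword constructed from the intersection of the Hermitian curve with a carefully chosen line or conic.

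For the lower bound I start at the divisor $G = K + aP_\infty + bP_0$ and build a path of divisors by incrementing the coefficient of $P_\infty$ or $P_0$ by $1$ at each step, stopping once the associated evaluation space is all of $\mathbb{F}_{q^2}^n$. The shift bound says the minimum distance of $C(D,G)^\perp$ is at least the minimum of the multiplicities $m_{P_\infty}$ or $m_{P_0}$ collected along the path. Theorem \ref{thm:mult} tells me that the multiplicity equals $d^*$ precisely when the current parameters satisfy $a_1 < a_0 + b_0$ (for a $P_\infty$-step) or $b_1 < a_0 + b_0$ (for a $P_0$-step), and is strictly larger in the Case~$2$ / $2^{'}$ regimes. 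For Case~$1$ of the theorem, where we already have $a_1, b_1 < a_0 + b_0$, I would use a path that stays in this regime all the way, yielding the bound $d^*$. For Cases~$2$, $2^{'}$, $3$, $3^{'}$, and $4$, I would first take enough $P_\infty$-steps (resp.\ $P_0$-steps) so that the offending parameter $a_1$ (resp.\ $b_1$) drops below $a_0 + b_0$, and verify via Theorem \ref{thm:mult} that the minimum multiplicity encountered along this path is exactly $d^* + \delta$.

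For the upper bound, a dual codeword arises as the residue vector $\bc = (\Res_{P_i}\omega)_{i=1}^n$ of a rational differential $\omega$ with $(\omega) \geq D - G$. Taking $\omega = df/f$ for a suitable function $f \in \mathbb{F}_{q^2}(X)$, the weight of $\bc$ is bounded above by the number of affine $\mathbb{F}_{q^2}$-points of $X$ at which $f$ does not vanish. As announced in the introduction, conics and lines provide the geometric input: a line meets $X$ in $q+1$ rational points (with tangency behavior at $P_\infty$ or $P_0$ controllable), and certain conics meet $X$ in $2(q+1)$ rational points. Combining these with prescribed tangency at $P_\infty$ and $P_0$ to absorb the poles demanded by $G$ produces, for each case, a codeword in $C(D,G)^\perp$ of weight exactly $d^* + \delta$.

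The main obstacle will be the bookkeeping in the lower bound. The parameters $(a_0, a_1, b_0, b_1)$ transform nontrivially under a single $P_\infty$- or $P_0$-increment (for instance $a_1 = 0$ jumps to $a_1 = q$ together with $a_0 \mapsto a_0 + 1$), so verifying that a suitable path exists in every one of the six cases, including the boundary sub-cases where $a_1$ or $b_1$ equals $0$ or $q$, requires a careful case split. For the upper bound the geometric constructions become slightly more intricate in Case~$4$, where $a_1 = b_1 = q$ forces the use of a tangent line to secure the extra saving of $q - (a_0+b_0)$; but once the right line or conic is identified for each case, the weight computation is direct.
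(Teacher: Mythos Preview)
Your lower-bound plan is exactly the paper's: choose a path of $P_\infty$- or $P_0$-increments and take the minimum multiplicity along it, invoking Theorem~\ref{thm:mult} (via Lemmas~\ref{mult2} and~\ref{mult3}). One small correction: in Case~1 your path need \emph{not} ``stay in the regime $a_1<a_0+b_0$'' (an $a_1=0\to q$ jump can throw you out), but this is harmless because under the hypotheses of the theorem one always has $a_0+b_0\ge 1$, so Case~3 of Proposition~\ref{prop:mult} never occurs and every multiplicity along the path is at least the current $d^*$, hence at least the initial $d^*$.

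The upper bound, however, has a genuine gap. Your proposed mechanism $\omega=df/f$ does not do what you claim: $\Res_{P_i}(df/f)=\ord_{P_i}(f)\pmod p$, so the resulting residue vector is supported on the \emph{zeros and poles} of $f$ (of order prime to $p$), not on the points where $f$ fails to vanish. Also the inequality should read $(\omega)\ge G-D$, not $D-G$. The paper's route is the standard one: a word of weight $d$ supported on $P_1,\dots,P_d$ exists iff $\Omega(G-P_1-\cdots-P_d)\neq\Omega(G)$, i.e.\ iff $L(P_1+\cdots+P_d-aP_\infty-bP_0)\neq 0$; one then exhibits an explicit function in this space. Two points you are missing here: first, Cases~$2$, $2'$, $3$, $3'$ all reduce to Case~$1$ by absorbing the excess into an effective divisor $E$ (e.g.\ $E=(a_1-(a_0+b_0))P_\infty$ in Case~$2$), so only Case~$1$ and Case~$4$ need actual constructions. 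Second, in Case~$1$ the function is built as a product of conics $x^2-\alpha_i y$ and lines $y-\beta_j x$, $x-\gamma_k$, $y-\delta_l$ with prescribed zeros at $P_\infty,P_0$ (Lemma~\ref{conD} guarantees enough conics), while Case~$4$ uses $f=(x/y)\prod_{i=1}^{a_0+b_0-1}(y-y_i)^{-1}$ rather than a single tangent line. Once you replace the $df/f$ idea by this $L$-space criterion, your sketch becomes the paper's proof.
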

\begin{proof}
The proof is given in Section \ref{sec:pfdist}
\end{proof}
%------------------------------------------------------------
\begin{remark}
We can rewrite Theorem \ref{thm:dist} as
$$
d(C(D,G)^\perp)=d^*+\max\{0,a_1-(a_0+b_0),
b_1-(a_0+b_0),a_1+b_1-2(a_0+b_0)\},
$$
for all cases except case $4$.
\end{remark}
%------------------------------------------------------------
\begin{theorem}\label{thm:below}
Suppose that $G$ satisfies either
\begin{itemize}
\item[$(1)$]$\rm{deg} G < \rm{deg} K$ or
\item[$(2)$]$\rm{deg} K \leq \rm{deg} G \leq \rm{deg} K+q ~\text{with}~ G \sim sP_\infty ~\text{or}~ G \sim tP_0$ for some $s,t \in \mathbb{Z}$.
\end{itemize}
If $G = aP_\infty+bP_0$ with
\begin{eqnarray*}
a&=& a_0 (q+1)+a_1,~~~~0\leq a_1\leq q,\\
b&=& b_0 (q+1)+b_1,~~~~0\leq b_1\leq q.
\end{eqnarray*}
%If
%\begin{eqnarray*}
%&(1)&~ a_0+b_0\leq q-3,~~\text{or}\\
%&(2)&~ a_0+b_0=q-2,~~a_1=0,~~\text{or}\\
%&(3)&~ a_0+b_0=q-2,~~b_1=0,
%\end{eqnarray*}
then $$d(C(D,G)^\perp)=a_0+b_0+2.$$
\end{theorem}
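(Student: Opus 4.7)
The proof follows the same two-step strategy used for Theorem \ref{thm:dist}: first establish the lower bound $d(C(D,G)^\perp) \geq a_0+b_0+2$ via the shift-bound path method of Section \ref{multishift}, then exhibit an explicit dual codeword of weight exactly $a_0+b_0+2$. Note that because $\deg G$ is below $\deg K$, or $G$ is boundary-equivalent to a one-point divisor, Proposition \ref{prop:mult} does not apply directly at the starting divisor; the path must be chosen to enter its range before degenerating.

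For the lower bound, the plan is to apply Theorem \ref{shift} and its $P_0$-analogue iteratively along a path beginning at $G=aP_\infty+bP_0$ and ending at a divisor whose Riemann--Roch space fills $\mathbb{F}_{q^2}^n$. At each step we increment either the $P_\infty$- or $P_0$-coefficient by one and use the shift bound on any codeword of $C(D,G)^\perp$ that first becomes non-orthogonal at that step. The lower bound on the weight is then the maximum over paths of the minimum multiplicity along the path. For a path that first symmetrizes by raising the appropriate point so the new divisor sits in the range of Proposition \ref{prop:mult}, then continues alternately to fill the lattice, I expect the bottleneck multiplicity to occur at the very first step and equal $a_0+b_0+2$, which matches the claim. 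Establishing this requires an auxiliary multiplicity computation for divisors just below the canonical, governed by the Weierstrass structure $\langle q,q+1\rangle$ at both $P_\infty$ and $P_0$, which one can handle in parallel with the proof of Proposition \ref{prop:mult} in Section \ref{sec:pfmult}.

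For the matching upper bound, the plan is to exhibit a subset $S$ of $a_0+b_0+2$ distinct $\mathbb{F}_{q^2}$-rational affine points of $X$ together with a nonzero codeword of $C(D,G)^\perp$ supported exactly on $S$. A natural construction takes $S$ as a carefully chosen subset of the intersection of $X$ with a configuration of $a_0+1$ ``vertical'' lines through $P_\infty$ and $b_0+1$ lines through $P_0$, chosen so that the supplementary space $L(K-G+\sum_{P\in S}P)$ strictly contains $L(K-G)$. By Riemann--Roch this forces the evaluation map $L(G)\to\mathbb{F}_{q^2}^S$ to have rank strictly less than $|S|$; any nonzero vector in its left kernel is the desired dual codeword. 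Genericity of the chosen lines ensures that the support of the codeword is all of $S$, so that its weight is exactly $a_0+b_0+2$.

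The main obstacle will be the explicit construction of the set $S$ together with the verification that the rank deficit of the evaluation map is exactly one and that the resulting dual codeword has full support $S$ (so the weight is not strictly smaller than $a_0+b_0+2$). A secondary obstacle is the boundary case $\deg K\leq\deg G\leq\deg K+q$ with $G\sim sP_\infty$ or $G\sim tP_0$, where the code reduces to a Hermitian one-point code; here the formula $a_0+b_0+2$ must be cross-checked against the classical one-point minimum distance, and in the sub-canonical range a similar cross-check against the trivial and repetition-code boundary values (where $a_0+b_0+2=2$) validates the formula at the low end.
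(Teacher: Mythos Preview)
Your two–step strategy is right in outline, but both halves are incomplete, and in each half the paper takes a noticeably simpler route than the one you sketch.

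\textbf{Lower bound.} You propose running the path method from $G$ itself, conceding that Proposition~\ref{prop:mult} does not yet apply and that ``an auxiliary multiplicity computation for divisors just below the canonical'' is needed. You then assert that the bottleneck will be $a_0+b_0+2$ at the first step. This is not established, and in fact the very first steps from $G$ typically have multiplicity~$0$ (Weierstrass gaps), so the bottleneck is not literally at the first step. The paper avoids all of this with a sandwich: set $G_1 = a_0(q{+}1)P_\infty + b_0(q{+}1)P_0$ and $G_2 = G_1 + qP_\infty + qP_0$, so that $G_1 \le G \le G_2$. Monotonicity gives $C(D,G)^\perp \subseteq C(D,G_1)^\perp$, so it suffices to prove $d(C(D,G_1)^\perp)\ge a_0+b_0+2$, and for the specially shaped $G_1$ (both residues zero) this follows directly from the multiplicity formulas of Proposition~\ref{prop:mult} along the obvious path. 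No new multiplicity computation ``just below the canonical'' is required.

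\textbf{Upper bound.} You propose a configuration of $a_0+1$ vertical lines through $P_\infty$ and $b_0+1$ lines through $P_0$, then pick a subset $S$ of size $a_0+b_0+2$ and argue via rank deficiency. You flag verifying exact rank deficit and full support as the ``main obstacle'' --- and you do not resolve it. The paper's construction is much simpler and complete: take $P_1,\ldots,P_d$ (with $d=a_0+b_0+2$) among the $q-1$ affine rational points on the single line $x=0$ through $P_0$ and $P_\infty$. Because $P_0+P_\infty+P_1+\cdots+P_{q-1}\sim (q{+}1)P_\infty$, the required inequality of Riemann--Roch spaces reduces (for $G_2$, hence for $G$) to exhibiting
\[
f=\prod_{i=d+1}^{q-1}(y-y_i)\ \in\ L\bigl((q{-}1{-}a_0{-}b_0{-}2)H_\infty - P_{d+1}-\cdots-P_{q-1}\bigr)\setminus L\bigl(\cdots - P_1-\cdots-P_{q-1}\bigr),
\]
which is immediate. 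The two boundary cases $a_0+b_0=q-2$ with $a_1=0$ or $b_1=0$ are dispatched by the explicit functions $y/(y-x)$ and $x-1$. Your proposed cross-check against one-point formulas is unnecessary once these concrete witnesses are in hand.

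In short: the missing idea on the lower-bound side is the $G_1\le G\le G_2$ sandwich (reducing to divisors with both residues zero), and on the upper-bound side it is to use $d$ \emph{collinear} points on the line through $P_0$ and $P_\infty$ rather than a multi-line configuration.
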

\begin{proof}
The proof is given in Section \ref{sec:pfdist}
\end{proof}
%-------------------------------------------------------------------
%\begin{remark}
%If we assume $b_0=0$ and $K=(q-2)(q+1)P_\infty$, then conditions (1)
%and (2) of Theorem \ref{thm:below} implies that $G \leq K+qP_0$.
%Condition (3) of Theorem \ref{thm:below} implies $K<G\leq
%K+qP_\infty$.
%\end{remark}

%We state the formulas given by Beelen \cite{Beelen} and the range
%where the lower bound for the minimum distance is not sharp.

%\begin{theorem}\cite[Theorem 17]{Beelen}
%Consider the Hermitian curve defined over the finite field
%$\mathbb{F}_{q^2}$ defined by the equation $X^{q+1} = Y^q +Y $. We
%denote by $P$ the point $(0,0)$ and by $Q$ by the point at infinity.
%Moreover we denote by $\mathcal{P}$ the set consisting of the
%remaining $q^3 - 1$ rational points. Let $G = aP + bG$ be a divisor
%satisfying $a+b > q^2-q-2$. Given $a$ and $b$, we write $a=k
%(q+1)+l$ with $k$ and $l$ integers satisfying $-q \leq l \leq 0$.
%Also we write $b+k  (q+1) = q^2 -q_ 2 +i(q+1)+j$ with $i$ and $j$
%integers satisfying $i \geq 1$ and $-q \leq j \leq 0$. Then we have
%\begin{equation*}
%d(\mathcal{C}_{\mathcal{P}}(G)) \geq \left\{\begin{array}{cl}
%i(q+1)+l+j &\text{if~~} l \geq -i \text{~~and~~} j \geq -i,\\
%iq+l & \text{if~~} l \geq -i \text{~~and~~} j < -i,\\
%iq+j & \text{if~~} l < -i \text{~~and~~} j \geq -i,\\
%i(q-1)&\text{if~~} l < -i, j< -i \text{~~and~~} (l,j) \neq (-q,-q),\\
%(i-1)q &\text{if~~}(l,j)=(-q,-q) \text{~~and~~} i<q.
%\end{array}\right.
%\end{equation*}
%\end{theorem}

%======================================================================
\section{Proof of Proposition \ref {prop:mult}}\label {sec:pfmult}
%======================================================================
%------------------------------------------------------------
\ \\ \noindent{\bf Proposition 3.1} {\it Let $G = K+aP_\infty+bP_0$,
where $K$ is a canonical divisor,
\begin{eqnarray*}
a&=& a_0 (q+1)-a_1,~~~~0\leq a_1\leq q,\\
b&=& b_0 (q+1)-b_1,~~~~0\leq b_1\leq q.
\end{eqnarray*}
\begin{itemize}
\item[$1$.] If $a_1< a_0+b_0$, then \\
$m_{P_\infty}(2g-2+a,b)$ = $(a_0+b_0-a_1)(q+1)-b_1+a_1q$
\item[$2$.] If $a_0+b_0\leq a_1 \leq a_0+b_0+q-1$, then \\
$m_{P_\infty}(2g-2+a,b)$ = $a_1(q+a_0+b_0-a_1)-\min\{a_1,b_1\}$.
\item[$3$.] If $a_0+b_0+q-1<a_1$, then \\
$m_{P_\infty}(2g-2+a,b)$ = $0$.
\item[$1^{'}$.] If $b_1< a_0+b_0$, then \\
 $ m_{P_0}(2g-2+a,b)$ = $(a_0+b_0-b_1)(q+1)-a_1+b_1q$
\item[$2^{'}$.] If $a_0+b_0\leq b_1 \leq a_0+b_0+q-1$, then \\
 $ m_{P_0}(2g-2+a,b)$ = $b_1(q+a_0+b_0-b_1)-\min\{a_1,b_1\}$.
\item[$3^{'}$.] If $a_0+b_0+q-1<b_1$, then \\
$m_{P_0}(2g-2+a,b)$ = $0$.
\end{itemize}}

%=======Picture=======================
\begin{figure}[!ht]
\centering \psfrag{q}{$q$} \psfrag{b1}{$b_1$} \psfrag{a1}{$a_1$}
\psfrag{a0+b0-a1}{$a_0+b_0-a_1$}
\psfrag{a0+b0+q-1}{$a_0+b_0+q-1$}\psfrag{q+1}{$q+1$}
\includegraphics[scale=0.5]{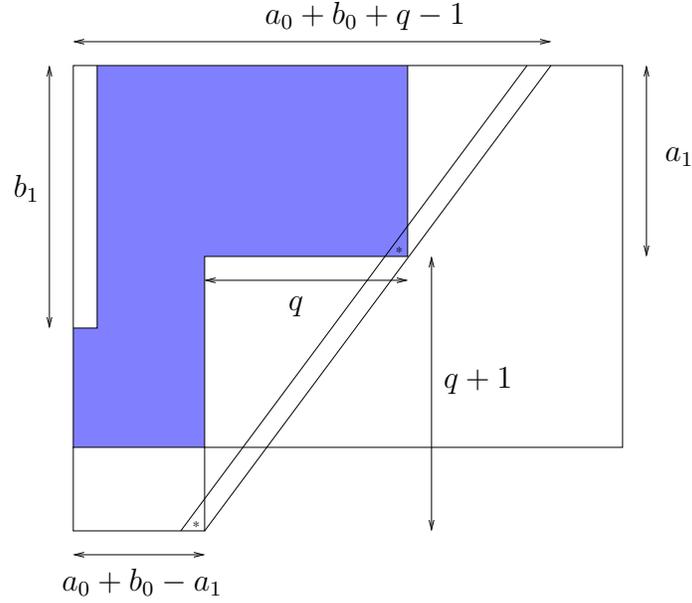}
\caption{Counting multiplicity}\label{mult}
\end{figure}
%=====================================
\begin{proof}
By renaming $P_0$ with $P_\infty$ and $P_\infty$ with $P_0$, it is
enough to prove the cases 1,2 and 3. We may assume that
$K=(2g-2)P_\infty$. Let $G=K+aP_\infty+bP_0$, where
\begin{eqnarray*}
a&=& a_0 (q+1)-a_1,~~~~0\leq a_1\leq q,\\
b&=& b_0 (q+1)-b_1,~~~~0\leq b_1\leq q.
\end{eqnarray*}
Then $L(K+aP_\infty+bP_0) = L((q-2+a_0+b_0)(q+1)P_\infty-a_1
P_\infty - b_1 P_0)$ is spanned by the monomials $x^i y^j$ with
\begin{align*}
   (1)~~ &0 \leq i \leq q,~~0 \leq j,~~ i+j \leq q-2+a_0+b_0, \\
   (2)~~ &i \geq a_1 ~~\text{ for }~~ i+j = q-2+a_0+b_0,\\
   (3)~~ &i \geq b_1 ~~\text{ for }~~ j=0. \\
\end{align*}
We determine the pairs of integers $(i,j)$ such that there exist
\begin{align*}
&f \in L(iP_\infty+bP_0) \backslash L((i-1)P_\infty+bP_0)\\
%=L((i+(q+1))P_\infty-bP_0) \backslash L((i-1)P_\infty+bP_0),
&g \in L(jP_\infty) \backslash L((j-1)P_\infty),
\end{align*}
with $fg \in L(K+(a+1)P_\infty+bP_0)\backslash L(K+aP_\infty+bP_0)$.
Let $x^{i_1} y^{j_1}$ and $x^{i_2} y^{j_2}$ be the leading monomials
of $f$ and $g$, respectively, with $~0 \leq i_1, ~i_2 \leq q,~~ 0
\leq j_1,~ j_2$. The product $fg \in L(K+(a+1)P_\infty+bP_0)
\backslash L(K+aP_\infty+bP_0)$ if
\begin{align*}
&(1)~~ x^{i_1} y^{j_1} x^{i_2} y^{j_2} = x^{a_1-1} y^{q-1+a_0+b_0-a_1},~~ \text{or} \\
&(2)~~ x^{i_1} y^{j_1} x^{i_2} y^{j_2} = x^{q+a_1}
y^{-1+a_0+b_0-a_1},
\end{align*}
The solutions for $(i_1,j_1)$ are
\begin{align*}
&(1)~~ 0 \leq i_1 \leq a_1-1,~~~ 0 \leq j_1 \leq q-1+a_0+b_0-a_1,
~~\text{such that}
     ~~i_1 \geq b_1 ~~\text{for}~~ j_1 = 0. \\
&(2)~~ a_1 \leq i_1 \leq q,~~~0 \leq j_1 \leq -1+a_0+b_0-a_1.
\end{align*}
or
\begin{align*}
&(1') ~~0 \leq j_1 \leq -1+a_0+b_0-a_1,~~~ 0 \leq i_1 \leq q,
~~\text{such
 that}~~ i_1 \geq b_1 ~~\text{for} ~~j_1 = 0. \\
&(2')~~ a_0+b_0-a_1 \leq j_1 \leq q-1+a_0+b_0-a_1,~~~0 \leq i_1 \leq
a_1 - 1.
\end{align*}
If  $a_0+b_0+q-1\geq a_1 \geq a_0+b_0$, then the total number of
pairs $(i,j)$ is $(a_0+b_0-a_1)(q+1)-b_1+q a_1. $ If $a_1 >
a_0+b_0+q-1$, then (1) and (2) have no solutions. For
$a_0+b_0-a_1\leq 0$, if $a_1=0$, then (1) and (2) have no solutions.
If $a_1\neq 0$, then there are no solutions in (2). In (1), there
are
$$
a_1(q+a_0+b_0-a_1)-\min\{a_1,b_1\}
$$
solutions. Thus we have the multiplicity
$a_1(q+a_0+b_0-a_1)-\min\{a_1,b_1\}$ if $a_0+b_0-a_1\leq 0$.
\end{proof}
% Since
%$(y)=(q+1)P_0-(q+1)P_\infty$, we have $(q+1)P_0\sim (q+1)P_\infty$.
%Thus the followings hold
%\begin{eqnarray*}
%G^\perp &=& K+(a_0(q+1)-a_1)P_\infty+(b_0(q+1)-b_1)P_0\\
%&\sim& K+(a_0(q+1)+b_0(q+1)-b_1)P_0-a_1P_\infty\\
%&\sim& K+((a_0-1)(q+1)+b_0(q+1)-b_1)P_0+(q+1-a_1)P_\infty\\
%&=& K+((a_0+b_0-1)(q+1)-b_1)P_0+(q+1-a_1)P_\infty.
%\end{eqnarray*}
%By using the functions $xy^{-1}$ and $y^{-1}$, we can change $P_0$
%with $P_\infty$ and $P_\infty$ with $P_0$. Therefore, the
%$m_{P_0}(2g-2+a,b)$ of $K+((a_0+b_0-1)(q+1)-b_1)P_0+(q+1-a_0)P_\infty$ is
%the same as $m_{P_\infty}(2g-2+a,b)$ of
%$K+((a_0+b_0-1)(q+1)-b_1)P_\infty+(q+1-a_0)P_\infty$. Let
%$A_0=a_0+b_0-1,~~A_1=b_1,~~ B_0=1$ and $B_1=a_1$. Then we have the
%following. If $A_1<A_0+B_0$, then
%$m_{P_\infty}(2g-2+a,b)=(A_0+B_0-A_1)(q+1)-B_1+A_1 q$, that is , if
%$b_1<a_0+b_0$, then $m_{P_0}(2g-2+a,b)=(a_0+b_0-b_1)(q+1)-a_1+b_1q$. If
%$A_1\geq A_0+B_0$, then
%$m_{P_\infty}(2g-2+a,b)=A_1(q+A_0+B_0-A_1)-\min\{B_1,A_1\}$, that is, if
%$b_1\geq a_0+b_0$, then
%$multi_{P_0}(a,b)=b_1(q+a_0+b_0-b_1)-\min\{a_1,b_1\}$.

%--------------------------------------------------------------------------
%\begin{remark}
%Let $d^*=\deg(G^\perp) -(2g-2)=a+b$. In Theorem \ref{prop:mult}, if
%$a_1< a_0+b_0$ or $b_1< a_0+b_0$, then the multiplicity is $d^*$.
%\end{remark}

%======================================================================
\section{Proof of Theorem \ref{thm:dist}, \ref{thm:below}}\label{sec:pfdist}
%======================================================================
For each path, the minimum of the multiplicities along the path is a
lower bound for the minimum distance of $C(D,G)^\perp$. In Theorem
\ref{thm:dist}, we find a path that gives a lower bound of the
minimum distance which is sharp. The following two lemmas give the
minimum of the multiplicities of a certain part of the path chosen
in Theorem \ref{thm:dist}.

\begin{lemma}\label{mult2}
Suppose that $G$ satisfies either
\begin{itemize}
\item[$(1)$]$\rm{deg} G > \rm{deg} K + q$ or
\item[$(2)$]$\rm{deg} K \leq \rm{deg} G \leq \rm{deg} K+q ~\text{and}~ G \nsim sP_\infty ~\text{and} ~G \nsim tP_0$ for all $s,t \in \mathbb{Z}$.
\end{itemize}
Let $G = K+aP_\infty+bP_0$, where $K$ is a
canonical divisor,
\begin{eqnarray*}
a&=& a_0 (q+1)-a_1,~~~~0\leq a_1\leq q,\\
b&=& b_0 (q+1)-b_1,~~~~0\leq b_1\leq q.
\end{eqnarray*}
Let $d^*=\deg(G) -(2g-2)=a+b$.\\
If $0\leq b_1\leq a_0+b_0\leq a_1$ and
%$I_2=\{(a_1,b_1),(a_1-1,b_1)\cdots,(a_0+b_0,b_1)\}$, then
$I_2=\{(2g-2+a,b),(2g-2+a+1,b),\ldots,(2g-2+a+a_1-(a_0+b_0),b)\}$,
then
$$\min_{i \in I_2}~(m_{P_\infty}(i))=d^*+a_1-(a_0+b_0)=a_0q+b_0q-a_1.$$\\
If $0\leq a_1\leq a_0+b_0\leq b_1$ and
%$I_{2^{'}}=\{(a_1,b_1),(a_1,b_1-1)\cdots,(a_1,a_0+b_0)\}$, then
$I_{2^{'}}=\{(2g-2+a,b),(2g-2+a,b+1),\ldots,(2g-2+a,b+b_1-(a_0+b_0))\}$,
then
$$\min_{i \in I_{2^{'}}}~(m_{P_0}(i))=d^*+b_1-(a_0+b_0)=a_0q+b_0q-b_1.$$
\end{lemma}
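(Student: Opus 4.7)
The plan is to parametrize the path $I_2$ explicitly, apply case 2 of Proposition \ref{prop:mult} at every point, and then minimize the resulting one-variable quadratic. First I would write each element of $I_2$ as $(2g-2+a+k,\,b)$ for $k=0,1,\ldots,a_1-(a_0+b_0)$. Because $a_1\le q$ and $a_1-k\ge a_0+b_0\ge 0$, the shift by $k$ stays within a single $(q+1)$-adic block: $a+k=a_0(q+1)-(a_1-k)$, so the parameters to plug into Proposition \ref{prop:mult} are $(a_0,\,a_1-k,\,b_0,\,b_1)$. (The hypothesis of the lemma rules out $a_0+b_0=0$, which would force $G=K\sim sP_\infty$ and hence fall under Theorem \ref{thm:below} instead.)

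Next I apply case 2 of Proposition \ref{prop:mult}, whose hypothesis $a_0+b_0\le a_1-k\le a_0+b_0+q-1$ is satisfied along the entire path (the left inequality is built into the range of $k$, and the right one follows from $a_1-k\le q\le a_0+b_0+q-1$). This yields
\[
m_{P_\infty}(2g-2+a+k,\,b)=(a_1-k)\bigl(q+a_0+b_0-(a_1-k)\bigr)-\min\{a_1-k,\,b_1\},
\]
and the standing assumption $b_1\le a_0+b_0\le a_1-k$ collapses the minimum to $b_1$. Letting $t=a_1-k$ run over the integer interval $[a_0+b_0,\,a_1]$ and setting $\phi(t)=t(q+a_0+b_0-t)-b_1$, the function $\phi$ is a downward parabola, hence its minimum on a closed interval is attained at an endpoint. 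The short identity
\[
\phi(a_0+b_0)-\phi(a_1)=\bigl(a_1-(a_0+b_0)\bigr)(a_1-q)
\]
is $\le 0$ since $a_0+b_0\le a_1\le q$, so the minimum occurs at $t=a_0+b_0$, i.e.\ at $k=a_1-(a_0+b_0)$, and equals $(a_0+b_0)q-b_1$. Substituting $d^*=(a_0+b_0)(q+1)-a_1-b_1$ rewrites this value as $d^*+a_1-(a_0+b_0)$, which is the claimed formula.

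The primed statement is the mirror image under the involution $P_\infty\leftrightarrow P_0$, $(a,a_0,a_1)\leftrightarrow(b,b_0,b_1)$, which swaps cases $1,2,3$ of Proposition \ref{prop:mult} with cases $1',2',3'$; the argument above transcribes verbatim. The main obstacle I anticipate is purely bookkeeping: verifying that the $(q+1)$-adic decomposition $a+k=a_0(q+1)-(a_1-k)$ is correct for every $k$ in the range and that case 2 of Proposition \ref{prop:mult} really does apply at each step, including at the terminal value $a_1-k=a_0+b_0$ (where one sits at the boundary between case 1 and case 2). Once those checks are in place the quadratic optimization is a one-line computation.
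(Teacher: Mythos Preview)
Your proof is correct and follows essentially the same approach as the paper. The paper parametrizes by $a_1'=a_1,a_1-1,\ldots,a_0+b_0$ (your $t=a_1-k$), applies case~2 of Proposition~\ref{prop:mult}, and factors directly:
\[
m_{P_\infty}(2g-2+a_0(q+1)-a_1',\,b)-\bigl(d^*+a_1-(a_0+b_0)\bigr)=(a_1'-(a_0+b_0))(q-a_1')\ge 0.
\]
Your parabola argument is the same computation viewed differently; your identity $\phi(a_0+b_0)-\phi(a_1)=(a_1-(a_0+b_0))(a_1-q)$ is precisely the negative of the paper's factorization at $a_1'=a_1$. You are more explicit than the paper about two points it leaves implicit: that case~2 really applies along the whole path (including the check $a_0+b_0\ge 1$), and that the minimum is actually attained at the endpoint $t=a_0+b_0$.
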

%------------------------------------------------------------
\begin{proof}
For the case  $0\leq b_1\leq a_0+b_0\leq a_1$, we need to show that
$m_{P_\infty}(i)\geq d^*+a_1-(a_0+b_0)$ for $i \in I_2$, that is, we
need to show that $m_{P_\infty}(2g-2+a_0(q+1)-a_1^{'},b) \geq
d^*+a_1-(a_0+b_0)$ for $a_1^{'}=a_1, a_1-1, \ldots, a_0+b_0$. By
Proposition \ref{prop:mult}, we have
\begin{align*}
&m_{P_\infty}(2g-2+a_0(q+1)-a_1^{'},b)-(d^*+a_1-(a_0+b_0))\\
=&(a_1^{'}-(a_0+b_0))(q-a_1^{'})\geq 0.
\end{align*}
The other case follows by symmetry.
\end{proof}

%------------------------------------------------------------
\begin{lemma}\label{mult3}
Suppose that $G$ satisfies either
\begin{itemize}
\item[$(1)$]$\rm{deg} G > \rm{deg} K + q$ or
\item[$(2)$]$\rm{deg} K \leq \rm{deg} G \leq \rm{deg} K+q ~\text{and}~ G \nsim sP_\infty ~\text{and} ~G \nsim tP_0$ for all $s,t \in \mathbb{Z}$.
\end{itemize}
Let $G = K+aP_\infty+bP_0$, where $K$ is a
canonical divisor,
\begin{eqnarray*}
a&=& a_0 (q+1)-a_1,~~~~0\leq a_1\leq q,\\
b&=& b_0 (q+1)-b_1,~~~~0\leq b_1\leq q.
\end{eqnarray*}
Let $d^*=\deg(G) -(2g-2)=a+b$.\\
If $a_0+b_0 \leq a_1 \leq b_1<q$ and
$I_3=\{(2g-2+a,b),(2g-2+a+1,b),\ldots,(2g-2+a+a_1-(a_0+b_0),b)\}$,
then
$$\min_{i \in I_3}~(m_{P_\infty}(i))=d^*+a_1+b_1-2(a_0+b_0)=(a_0+b_0)q-(a_0+b_0).$$\\
If $a_0+b_0 \leq b_1 \leq a_1<q$ and
$I_{3^{'}}=\{(2g-2+a,b),(2g-2+a,b+1),\ldots,(2g-2+a,b+b_1-(a_0+b_0))\}$,
then
$$\min_{i \in I_{3^{'}}}~(m_{P_0}(i))=d^*+a_1+b_1-2(a_0+b_0)=(a_0+b_0)q-(a_0+b_0).$$
\end{lemma}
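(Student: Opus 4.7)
The plan is to apply case~$2$ of Proposition~\ref{prop:mult} to every divisor along the path $I_3$, simplify the resulting expression using $a_1\leq b_1$, and then reduce the optimization to a short quadratic inequality on the interval $[a_0+b_0,a_1]$.

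First I would reparametrize. For each $j\in\{0,1,\ldots,a_1-(a_0+b_0)\}$, the first coordinate $2g-2+a+j$ equals $2g-2+a_0(q+1)-(a_1-j)$, which still has the form used in Proposition~\ref{prop:mult} with the same $a_0$ and with the varying parameter $a_1':=a_1-j$ lying in $[a_0+b_0,a_1]\subseteq[0,q]$; the parameters $b_0,b_1$ remain fixed. I then verify that every such point lies in case~$2$ of Proposition~\ref{prop:mult}: the chain $a_0+b_0\leq a_1'\leq a_1\leq b_1<q\leq a_0+b_0+q-1$ (with the trivial modification if $a_0+b_0=0$) supplies the required inequality, and $a_1'\leq a_1\leq b_1$ forces $\min\{a_1',b_1\}=a_1'$. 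Consequently Proposition~\ref{prop:mult} gives
$$
m_{P_\infty}\!\bigl(2g-2+a_0(q+1)-a_1',\,b\bigr)=a_1'\bigl(q+a_0+b_0-a_1'\bigr)-a_1'=a_1'\bigl((q-1)+(a_0+b_0)-a_1'\bigr).
$$

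Writing $c:=a_0+b_0$, the elementary identity
$$
a_1'\bigl((q-1)+c-a_1'\bigr)-c(q-1)=(a_1'-c)(q-1-a_1')
$$
shows that this quantity is minimized on $[c,a_1]$ at the left endpoint $a_1'=c$, since $a_1'-c\geq 0$ and $q-1-a_1'\geq q-1-a_1\geq q-1-b_1\geq 0$. The minimum value is $c(q-1)=(a_0+b_0)(q-1)$, and substituting $d^*=(a_0+b_0)(q+1)-(a_1+b_1)$ one checks at once that $d^*+a_1+b_1-2(a_0+b_0)=(a_0+b_0)(q-1)=(a_0+b_0)q-(a_0+b_0)$, matching both forms stated in the lemma.

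The $P_0$ version follows from the same argument after interchanging $P_\infty\leftrightarrow P_0$, $a\leftrightarrow b$, $a_1\leftrightarrow b_1$, and invoking cases $1'$--$3'$ of Proposition~\ref{prop:mult} in place of $1$--$3$. The only real risk in the argument is a miscase in the appeal to Proposition~\ref{prop:mult} --- accidentally falling into case~$1$ or case~$3$ somewhere along $I_3$ --- but the hypothesis $a_0+b_0\leq a_1\leq b_1<q$ is arranged precisely so that case~$2$ applies, with the simplified $\min\{\cdot,\cdot\}$, at every step of the path.
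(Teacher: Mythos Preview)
Your proposal is correct and follows essentially the same route as the paper's own proof: both reparametrize the path $I_3$ by $a_1'\in[a_0+b_0,\,a_1]$, apply case~2 of Proposition~\ref{prop:mult} with $\min\{a_1',b_1\}=a_1'$, and reduce to the factored inequality $(a_1'-(a_0+b_0))(q-1-a_1')\geq 0$; the paper writes this as $(a_1'-(a_0+b_0))(q-a_1')-(a_1'-(a_0+b_0))$, which is the same expression. Your write-up is slightly more explicit in checking that case~2 applies throughout and in identifying the endpoint $a_1'=a_0+b_0$ where the minimum is attained.
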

%------------------------------------------------------------
\begin{proof}
For the case $a_0+b_0 \leq a_1 \leq b_1<q$, we need to show that\\
$m_{P_\infty}(i)\geq d^*+a_1+b_1-2(a_0+b_0)$ for $i \in I_3$, that
is, we need to show that \\
$m_{P_\infty}(2g-2+a_0(q+1)-a_1^{'},b) \geq d^*+a_1+b_1-2(a_0+b_0)$
for $a_1^{'}=a_1, a_1-1, \ldots, a_0+b_0$. By Proposition
\ref{prop:mult}, we have
\begin{eqnarray*}
&&m_{P_\infty}(2g-2+a_0(q+1)-a_1^{'},b)-(d^*+a_1+b_1-2(a_0+b_0))\\
&&=a_1^{'}(q+a_0+b_0-a_1^{'})-a_1^{'}-(a_0+b_0)q+(a_0+b_0)\\
&&=(a_1^{'}-(a_0+b_0))(q-a_1^{'})-(a_1^{'} -(a_0+b_0))\geq 0
\end{eqnarray*}
The other case follows by symmetry.
\end{proof}

In order to prove that the lower bounds of Theorem \ref{thm:dist}
and Theorem \ref{thm:below} are sharp, we need to show that there
exist words that have weight equal to the lower bounds. This can be
shown by constructing functions with certain properties. The
functions consist of multiplications of conics and lines. The
following lemmas show that there are enough conics and lines to
construct such functions.

\begin{lemma}
The curves $y^q+y=x^{q+1}$ and $x^2=\alpha y$ share the following automorphisms,
\[
\sigma(x,y) = (xy^{-1},y^{-1}), \quad \rho_a(x,y) = (ax,a^2y) ~~ \text{for $a \in \mathbb{F}_{q}^\ast.$}
\]
The group generated by the automorphisms is the dihedral group of size $2(q-1).$
\end{lemma}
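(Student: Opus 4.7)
The plan is to verify the two claims in turn: first that each of the maps $\sigma$ and $\rho_a$ is an automorphism of both curves, and second that together they generate a dihedral group of order $2(q-1)$.

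For the first part, I would substitute directly. To check $\sigma$ on the Hermitian curve, substitute $x \mapsto xy^{-1}$, $y \mapsto y^{-1}$ into $y^q+y-x^{q+1}$ to get $y^{-q}+y^{-1}-x^{q+1}y^{-(q+1)}$; multiplying through by $y^{q+1}$ recovers $y+y^q-x^{q+1}=0$. To check $\sigma$ on $x^2=\alpha y$, the same substitution gives $x^2 y^{-2}=\alpha y^{-1}$, which rearranges to the original equation. For $\rho_a$ on the Hermitian curve, the key identity is $a^{q+1}=a^{2}$ for $a\in\mathbb{F}_q^\ast$ (since $a^q=a$), so $(a^2 y)^q+a^2 y=a^2(y^q+y)=a^2 x^{q+1}=(ax)^{q+1}$. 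For $\rho_a$ on the conic, $(ax)^2=a^2 x^2=\alpha\,a^2 y$ is immediate.

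For the group-theoretic part, I would compute the order of each generator and the commutation relation between them. A direct calculation gives $\sigma^{2}(x,y)=\sigma(xy^{-1},y^{-1})=(xy^{-1}\cdot y,\,y)=(x,y)$, so $\sigma$ is an involution. The map $a\mapsto \rho_a$ is an injective homomorphism $\mathbb{F}_q^\ast\hookrightarrow \mathrm{Aut}(X)$, so the $\rho_a$'s form a cyclic subgroup of order $q-1$. The crucial computation is the conjugation relation:
\[
\sigma\rho_a\sigma(x,y)=\sigma\rho_a(xy^{-1},y^{-1})=\sigma(axy^{-1},a^2 y^{-1})=(a^{-1}x,a^{-2}y)=\rho_{a^{-1}}(x,y).
\]
This is the defining presentation of the dihedral group: an element of order $2$ inverting each element of a cyclic group of order $q-1$. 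Since $\sigma$ is manifestly not one of the $\rho_a$ (the $\rho_a$ are linear while $\sigma$ is not), the subgroup generated has order exactly $2(q-1)$.

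I do not expect any real obstacle; the lemma is a routine verification. The only point requiring slight care is recognizing that $\rho_a$ preserves the Hermitian curve precisely because $a\in\mathbb{F}_q^\ast$ (so that $a^{q+1}=a^2$), which is why the statement is restricted to this subfield; for $a\in\mathbb{F}_{q^2}^\ast\setminus\mathbb{F}_q^\ast$ the map would fail to preserve the Hermitian equation, so the dihedral group produced here is the largest one built from these two families of transformations.
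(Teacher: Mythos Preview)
Your proposal is correct and follows exactly the same approach as the paper: direct verification that $\sigma$ and $\rho_a$ preserve both defining equations, followed by the observation that $\sigma$ has order two and satisfies $\sigma\rho_a\sigma=\rho_a^{-1}$, which is the dihedral presentation. The paper's own proof simply asserts that the first claim is easily verified and states the order-two and conjugation relations without writing them out; your version merely fills in the routine substitutions.
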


\begin{proof}
The first claim is easily verified. Finally, $\sigma$ is of order two and
$\sigma \rho_a \sigma = \rho_a^{-1}.$
\end{proof}

\begin{lemma} \label{conD}
For a rational point $P=(u,v)$ with $u \neq 0$ and $v \not \in \mathbb{F}_{q}$, the function
$x^2 - \alpha y$, for $\alpha = u^2 / v $, has zeros in $P_0$ (with multiplicity two)
and in $2q-2$ other rational points including $P$. The number of such functions is
$(q^2-1)/2$ when $q$ is odd and $(q^2+q)/2$ when $q$ is even.
\end{lemma}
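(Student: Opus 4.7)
The plan is to first pin down the divisor of $x^2-\alpha y$ on the Hermitian curve, and then use the dihedral group action from the preceding lemma to exhibit enough rational zeros.

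First I would compute the principal divisor of $f=x^2-\alpha y$. At $P_\infty$ one has $\ord_{P_\infty}(x)=-q$ and $\ord_{P_\infty}(y)=-(q+1)$, so $\ord_{P_\infty}(x^2)=-2q<-(q+1)=\ord_{P_\infty}(\alpha y)$, giving $(f)_\infty=2qP_\infty$. At $P_0$ the function $x$ is a uniformizer with $\ord_{P_0}(y)=q+1$ (from $y(1+y^{q-1})=x^{q+1}$), so $\ord_{P_0}(x^2)=2<q+1=\ord_{P_0}(\alpha y)$, giving a double zero at $P_0$. Consequently $f$ has exactly $2q-2$ further zeros (counted with multiplicity) away from $P_0$.

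Next I would use the preceding lemma: the dihedral group $D=\langle\sigma,\rho_a:a\in\mathbb{F}_q^*\rangle$ of order $2(q-1)$ preserves both the Hermitian curve and the conic $x^2=\alpha y$. I would compute the stabilizer of $P=(u,v)$ in $D$: $\rho_a P=P$ forces $a=1$ since $u\neq 0$, and $\rho_a\sigma P=P$ forces $a=v\in\mathbb{F}_q^*$, which is ruled out by $v\notin\mathbb{F}_q$. Hence the orbit $DP$ has exactly $2(q-1)$ elements, all rational points on $X$ that also lie on the conic (so they are zeros of $f$), and none of them coincides with $P_0$ since $u\neq 0$. Because $2(q-1)$ matches the remaining zero-count, these must be \emph{all} the zeros off $P_0$ and they are simple.

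For the counting, I would note that distinct $\alpha$'s correspond to distinct conics, and two admissible points $P,P'$ yield the same $\alpha$ exactly when they lie in a common $D$-orbit. So the number of conics equals
\[
\frac{N}{2(q-1)},\qquad N=\#\{P=(u,v)\in X(\mathbb{F}_{q^2}):u\neq 0,\ v\notin\mathbb{F}_q\}.
\]
I would count $N$ by inclusion–exclusion on the $q^3$ affine rational points: there are $q$ points with $u=0$; the points with $v\in\mathbb{F}_q$ satisfy $u^{q+1}=2v$, giving $q^2$ such points for odd $q$ (with a single overlap at $P_0$), versus $v^q+v=0$ for even $q$, forcing $u=0$ and giving complete overlap with $\{u=0\}$. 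This yields $N=(q-1)^2(q+1)$ for odd $q$ and $N=q^3-q$ for even $q$; dividing by $2(q-1)$ gives $(q^2-1)/2$ and $(q^2+q)/2$ respectively.

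The main obstacle I expect is the bookkeeping around the characteristic split: the condition $v\notin\mathbb{F}_q$ has a different flavor in characteristic two (where it coincides with $u\neq 0$), so the inclusion–exclusion for $N$ must be performed separately. Everything else reduces to the divisor calculation and the free action of $D$.
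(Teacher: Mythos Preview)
Your proposal is correct and follows essentially the same route as the paper: compute the pole order $2q$ at $P_\infty$ and the double zero at $P_0$, then use the dihedral group of order $2(q-1)$ from the preceding lemma to fill the remaining $2q-2$ zeros with a single rational orbit, and finally divide the count $N$ of admissible points by the orbit size. Your stabilizer computation is just a cleaner repackaging of the paper's explicit disjointness check for the two halves of the orbit, and your inclusion--exclusion for $N$ matches the paper's table exactly.
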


\begin{proof}
The function $x^2 - \alpha y$ has poles only at $P_\infty$ of order $2q$. Thus, there are
$2q$ zeros, two of which are $P_0$. We claim that the remaining zeros form a single
orbit under the action of the dihedral group in the previous lemma. The orbit
includes the points
\[
\{ (au,a^2v), (auv^{-1}, a^2v^{-1}) : a \in \mathbb{F}_{q}^\ast \}.
\]
For $P=(u,v)$ with $u \neq 0$, the set $\{ (au,a^2v) : a \in \mathbb{F}_{q}^\ast \}$
consists of $q-1$ distinct points. To show that the second group of $q-1$ points is disjoint
from the first group it suffices to show that $(uv^{-1},v^{-1}) \not \in \{ (au,a^2v) : a \in \mathbb{F}_{q}^\ast \}$.
But $(uv^{-1},v^{-1}) = (au,a^2v)$ if and only if $v=a^{-1}$ which is excluded by the
assumption $v \not \in \mathbb{F}_{q}$. We compute the number of points $N$ such that
$u \neq 0$ and $v \not \in \mathbb{F}_{q}$.
\[
\begin{array}{|c|c|c|c|c|c|}
\hline
 &P=(u,v)   &P : v \in \mathbb{F}_{q} &P : u = 0 &P: v \in \mathbb{F}_{q} \wedge u = 0  &N \\\hline
\text{$q$ odd}  &q^3 &q^2 &q &1 &q^3-q^2-q+1 \\
\text{$q$ even} &q^3 &q   &q &q &q^3-q\\
\hline
\end{array}
\]
To each point corresponds a unique function, and the number of functions is obtained
as $N/(2q-2).$
\end{proof}
We rewrite Lemma \ref{conD} in terms of divisors in Remark \ref{conic}.
\begin{remark} \label{conic}
Let $x^2-\alpha y$ be a conic over the field $\mathbb{F}_{q^2}$ such
that
\begin{equation}\label{cocond}
(x^2-\alpha y)=2P_\infty+2P_0+P_1+ \cdots +P_{2(q-1)}-2H_\infty,
\end{equation}
where $P_i $'s are distinct $\mathbb{F}_{q^2}$-rational points for $i=1,2,\ldots , 2(q-1)$ and $H_\infty = (q+1)P_\infty$.\\
If $q$ is odd then there are $(q^2-1)/2$ number of conics that
satisfies
(\ref{cocond}).\\
If $q$ is even then there are $(q^2+q)/2$ number of conics that
satisfies
(\ref{cocond}).\\
\end{remark}

\begin{remark}\label{linecond}
For the Hermitian curve $y^q+y=x^{q+1}$ over $\mathbb{F}_{q^2}$, the
line passing through any two rational points intersect the curve in
q+1 distinct rational points. Hence we can choose a line with
divisors as below :
\begin{align*}
&(y-\beta x)= P_0+ q ~\text{distinct points}~ - H_\infty.\\
&(x-\gamma)=P_\infty + q  ~\text{distinct points}~ - H_\infty.\\
&(y-\delta)=q+1 ~\text{distinct points}~ - H_\infty.
\end{align*}
for some $\beta,~~\gamma$ and $\delta$ in $\mathbb{F}_{q^2}$.
\end{remark}
%------------------------------------------------------------
%------------------------------------------------------------
\ \\ \noindent {\bf Theorem 3.3} {\it Suppose that $G$ satisfies either
\begin{itemize}
\item[$(1)$]$\rm{deg} G > \rm{deg} K + q$ or
\item[$(2)$]$\rm{deg} K \leq \rm{deg} G \leq \rm{deg} K+q ~\text{and}~ G \nsim sP_\infty ~\text{and} ~G \nsim tP_0$ for all $s,t \in \mathbb{Z}$.
\end{itemize}
Let $G = K+aP_\infty+bP_0$, where $K$ is a
canonical divisor,
\begin{eqnarray*}
a&=& a_0 (q+1)-a_1,~~~~0\leq a_1\leq q,\\
b&=& b_0 (q+1)-b_1,~~~~0\leq b_1\leq q.
\end{eqnarray*}
Let $d^*=\deg(G) -(2g-2)=a+b$.
\begin{itemize}
\item[$1$.] If $0\leq a_1,b_1\leq a_0+b_0$, then\\
$d(C(D,G)^\perp)=d^*$.
\item[$2$.] If $0\leq b_1\leq a_0+b_0<a_1$, then\\
$d(C(D,G)^\perp)=d^*+a_1-(a_0+b_0)$.
\item[$2^{'}$.] If $0\leq a_1\leq a_0+b_0<b_1$, then\\
$d(C(D,G)^\perp)=d^*+b_1-(a_0+b_0)$.
\item[$3$.] If $a_0+b_0<a_1\leq b_1<q$, then\\
$d(C(D,G)^\perp)=d^*+a_1+b_1-2(a_0+b_0)$.
\item[$3^{'}$.] If $a_0+b_0<b_1\leq a_1<q$, then\\
$d(C(D,G)^\perp)=d^*+a_1+b_1-2(a_0+b_0)$.
\item[$4$.] If $a_0+b_0<a_1,b_1$ and $a_1=q$, $b_1=q$, then\\
$d(C(D,G)^\perp)=d^*+q-(a_0+b_0)$.
\end{itemize}}

%------------------------------------------------------------
%=======Picture=======================
\begin{figure}[!ht] \label{blockfig}
\centering \psfrag{q}{$q$} \psfrag{b1}{$b_1$} \psfrag{0}{$0$}
\psfrag{a1}{$a_1$} \psfrag{Case 1}{$Case 1$} \psfrag{Case 3}{$Case
3$} \psfrag{Case 2}{$Case 2$} \psfrag{Case 2'}{$Case 2^{'}$}
\psfrag{Case 3'}{$Case 3^{'}$} \psfrag{Case4}{$Case4$}
\includegraphics[scale=0.7]{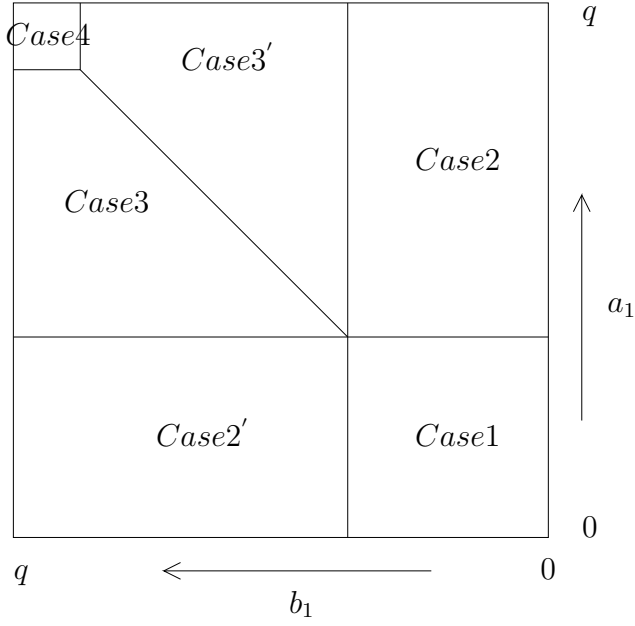}
\caption{Cases for Theorem \ref{thm:dist}}
\end{figure}
%=====================================
\begin{proof}
To prove the theorem we find a path for each case which will give a
lower bound of the minimum distance. Then we show that it is sharp
by
finding a word of weight equal to the lower bound.\\
Case $1$.\\
We fix $b$ and increase $a$. By Theorem \ref {thm:mult},
$m_{P_\infty}(2g-2+a,b)=d^*=a+b$ and $m_{P_\infty}(2g-2+a',b')\geq
a'+ b'$ for arbitrary $a'$ and $b'$. Thus the lower bound is
$d^*$.\\
Case $2$.\\
We fix $b$ and increase $a$ to $a+a_1-(a_0+b_0)$. Then Case $2$ is
reduced to Case $1$. By Lemma \ref{mult2}, the lower bound is
$d^*+a_1-(a_0+b_0)$.\\
Case $2^{'}$.\\
By the symmetry of Case $2$, the lower bound is
$d^*+b_1-(a_0+b_0)$. \\
Case ${3}$.\\
We fix $b$ and increase $a$ to $a+a_1-(a_0+b_0)$. Then Case $3$ is
reduced to Case $2^{'}$. By Lemma \ref{mult3}, the lower bound is
$d^*+a_1+b_1-2(a_0+b_0)$.\\
Case $3^{'}.$\\
By the symmetry of Case $3$, the lower bound is
$d^*+a_1+b_1-2(a_0+b_0)$. \\
Case $4$.\\
We fix $b$ and increase $a$ by 1. Then Case $4$ is reduced to Case
$3$. By Proposition \ref{prop:mult},
\begin{align*}
&m_{P_\infty}(2g-2+a,b)=q(a_0+b_0)-q ~ \text{ and} \\
&m_{P_\infty}(2g-2+a+1,b)=(q-1)(a_0+b_0+1)-(q-1)\\
&~~~~~~~~~~~~~~~~~~~~~~~~~~~~~~= q(a_0+b_0)-(a_0+b_0).
\end{align*}
Thus the lower bound is obtained at $m_{P_\infty}(2g-2+a,b)$. In
order to prove that the lower bound of the code using the points $P
\in X(\mathbb{F}_{q^2})\backslash \{P_0, P_\infty \}$ is sharp, we
need to find a word of weight $d$, where $d$ is the lower bound.
We use the fact
that there exist a word with support $P_1,P_2,\ldots, P_d$ if and
only if $\Omega(G-P_1\cdots -P_d)\neq \Omega(G)$. Equivalently,
$$L(P_1 + \cdots+ P_d-aP_\infty-bP_0) \neq L(-aP_\infty-bP_0).$$
Note that $L(-aP_\infty-bP_0)=0$ because $a+b>1$. We need to find
$P_1,P_2,\ldots, P_d$ such that
$$P_1 +
\cdots+ P_d+a_1P_\infty+b_1P_0 \sim (a_0+b_0)H_\infty
+E,~~\text{where}~~ E \geq 0.$$
 Case  $2^{'}$ reduces to Case $1$ by taking
$E=(b_1-(a_0+b_0))P_0$. Case $2$ reduces to Case $1$ by taking
$E=(a_1-(a_0+b_0))P_\infty$. Case $3$ and $3^{'}$ reduces to Case
$1$ by taking $E=(a_1-(a_0+b_0))P_\infty + (b_1-(a_0+b_0))P_0$. Thus
all cases reduce to case $1$ except case $4$ which we prove separately.\\
Case $4$. $a_0+b_0<a_1,b_1$ and $a_1=q$, $b_1=q$.\\
We need to find $P_1,P_2,\ldots, P_d$, where $d=(a_0+b_0-1)q$, such
that
\begin{equation}\label{upperboundeq}
P_1 + \cdots+ P_d+qP_\infty + qP_0 \sim (a_0+b_0)H_\infty +E,~~\text{
where}~~ E \geq 0.
\end{equation}
%Thus it suffice to find a function in
%$$
%L(P_1 + \cdots+ P_d+qP_\infty + qP_0 - (a_0+b_0)H).
%$$
Let $(x)=P_\infty+P_0+P_1+P_2+ \cdots + P_{q-1}-H_\infty$. We take
$$f=\frac{x}{y}\prod_{i=1}^{a_0+b_0-1}\frac{1}{y-y_i},$$ where $y_i$ is the
$y$-coordinate of $P_i$.\\ We have
\begin{align*}
&\left(\prod_{i=1}^{a_0+b_0-1}\frac{1}{y-y_i}\right)=(a_0+b_0-1)H_\infty
-P_1-P_2-\cdots-P_{a_0+b_0-1}-\cdots -P_{(a_0+b_0-1)(q+1)} ~~
\text{and}\\
&\left( \frac{x}{y} \right)= P_\infty + P_0 + P_1 + ... + P_{q-1} -
(q+1)P_0.
\end{align*}
Hence
$$(f)=(a_0+b_0)H_\infty - qP_\infty - qP_0- (a_0+b_0-1)q ~~\text{distinct
points}  +P_{a_0+b_0} + \cdots + P_{q-1}.$$ Therefore
(\ref{upperboundeq}) is satisfied with $E = P_{a_0+b_0} +\cdots +
P_{q-1}.$\\
Case $1$. $0\leq a_1,b_1\leq a_0+b_0$.\\
We need to show that there exist a function in
\begin{align*}
&L(P_1 +\cdots+ P_d+a_1P_\infty + b_1P_0 - (a_0+b_0)H_\infty),~~ \text{
or}\\
&L((a_0+b_0)H_\infty - P_1 -\cdots - P_d-a_1P_\infty - b_1P_0 ).
\end{align*}
We construct $f$ in $L((a_0+b_0)H_\infty - P_1 -\cdots - P_d-a_1P_\infty -
b_1P_0 )$ by using the functions $x^2-\alpha_{i}y,~~ y-\beta_jx,~~
x-\gamma_k$, and $~y-\delta_l$. Since $0+K+aP_\infty+bP_0 \sim K+D$,
$a+b=q^3-1$. Then $a_0+b_0 \leq q^2-q-1$. Thus we need at most
$(q^2-q-1)/2$ conics that satisfy the condition (\ref{cocond}) of
Remark \ref{conic}. By Lemma \ref{conD}, there are enough conics
that satisfy (\ref{cocond})
which can be used to construct the function $f$.\\
Case : $a_1 \leq b_1 \leq a_0+b_0$.\\
If $a_1=2m$ is even then, take
$$f=\prod_{i=1}^{m}(x^2-\alpha_{i}y) \times \prod_{i=1}^{b_1-a_1}(y-\beta_ix) \times \prod_{i=1}^{a_0+b_0-b_1}(y-\delta_i).$$
If $a_1=2m+1$ is odd, then take
$$f=x\prod_{i=1}^{m}(x^2-\alpha_{i}y) \times \prod_{i=1}^{b_1-a_1}(y-\beta_ix) \times \prod_{i=1}^{a_0+b_0-b_1}(y-\delta_i).$$
Case: $b_1 < a_1$.\\
If $b_1=2m$ is even then, take
$$f=\prod_{i=1}^{m}(x^2-\alpha_{i}y) \times \prod_{i=1}^{a_1-b_1}(x-\gamma_i) \times \prod_{i=1}^{a_0+b_0-a_1}(y-\delta_i).$$
If $b_1=2m+1$ is odd, then take
$$f=x\prod_{i=1}^{m}(x^2-\alpha_{i}y) \times \prod_{i=1}^{a_1-b_1}(x-\gamma_i) \times \prod_{i=1}^{a_0+b_0-a_1}(y-\delta_i).$$
\end{proof}
%------------------------------------------------------------------
%------------------------------------------------------------
\ \\ \noindent {\bf Theorem 3.5} {\it Suppose that $G$ satisfies either
\begin{itemize}
\item[$(1)$]$\rm{deg} G < \rm{deg} K$ or
\item[$(2)$]$\rm{deg} K \leq \rm{deg} G \leq \rm{deg} K+q ~\text{with}~ G \sim sP_\infty ~\text{or}~ G \sim tP_0$ for some $s,t \in \mathbb{Z}$.
\end{itemize}
If $G = aP_\infty+bP_0$ with
\begin{eqnarray*}
a&=& a_0 (q+1)+a_1,~~~~0\leq a_1\leq q,\\
b&=& b_0 (q+1)+b_1,~~~~0\leq b_1\leq q.
\end{eqnarray*}
%If
%\begin{eqnarray*}
%&(1)&~ a_0+b_0\leq q-3,~~\text{or}\\
%&(2)&~ a_0+b_0=q-2,~~a_1=0,~~\text{or}\\
%&(3)&~ a_0+b_0=q-2,~~b_1=0,
%\end{eqnarray*}
then $$d(C(D,G)^\perp)=a_0+b_0+2.$$}
%-----------------------------------------------------------------
\begin{proof}
%%%%%%%%%%%%%%%%%%%%%%%%%%%%%%%%%%%%%%%%%%%%%%%%%%%%%%%%%%%%%%%%%
%  Choice of the path is fix a and b-->q then fix b and increase a.
%  Need to show:
%  1. In 9 by 9 block the last row have the same mult and will
%    always give the min.
%  2. In this block when b --> q then the mult will increase or be equal.
%  Thus the last column will always give the max of the min's.%
%%%%%%%%%%%%%%%%%%%%%%%%%%%%%%%%%%%%%%%%%%%%%%%%%%%%%%%%%%%%%%%%%%%
We may assume that $K=(q-2)(q+1)P_\infty$. Let $H_0 = (q+1)P_0$.\\
It suffices to prove for the three cases (i) $a_0+b_0\leq q-3$, (ii) $a_0+b_0=q-2,~~a_1=0$, and
(iii) $ a_0+b_0=q-2,~~b_1=0$. If we assume $b_0=0$ then (i) contains the case $\rm{deg} G < \rm{deg} K$, (ii) contains the case $\rm{deg} K \leq \rm{deg} G \leq \rm{deg} K+q ~\text{with}~ G \sim tP_0$ and (iii) contains the case $\rm{deg} K \leq \rm{deg} G \leq \rm{deg} K+q ~\text{with}~ G \sim sP_\infty$.\\
Case 1.  $a_0+b_0\leq q-3$\\
 Let $G_1
=a_0H_\infty+b_0H_0$ and $G_2 =a_0H_\infty+qP_\infty+b_0H_0+qP_0$.
By Proposition \ref{prop:mult}, we have that $d(C(D,G_1)^\perp)\geq
a_0+b_0+2$. Since $C(D,G_2)^\perp \subseteq C(D,G_1)^\perp$,
$~d(C(D,G_2)^\perp)\geq a_0+b_0+2$. We need to show that there
exists a word of weight $a_0+b_0+2$ in  $C(D,G_2)^\perp$. The code
$C(D,G_2)^\perp=C_\Omega(D,G_2)$ has a word of weight $d=a_0+b_0+2$
if there exists $P_1,\ldots,P_d$ with $\Omega(G_2-P_1\cdots
-P_d)\neq \Omega(G_2)$. Equivalently, if
\begin{eqnarray*}
&&L((q-2)H_\infty-(a_0+b_0+2)H_\infty+P_0+P_1+\cdots+P_d)\\
&&\neq L((q-2)H_\infty-(a_0+b_0+2)H_\infty+P_\infty+P_0).
\end{eqnarray*}
We choose $P_1,\ldots,P_d$ on a line that pass through $P_0$ and
$P_\infty$. Since $P_1+ \cdots +P_d+P_{d+1}+ \cdots
+P_{q-1}+P_0+P_\infty \sim (q+1)P_\infty$, it is enough to show that
\begin{eqnarray*}
&&L((q-1)H_\infty-(a_0+b_0+2)H_\infty-P_{d+1}
\cdots -P_{q-1})\\
&&\neq L((q-1)H_\infty-(a_0+b_0+2)H_\infty-P_1-P_2 \cdots -P_{q-1}).
\end{eqnarray*}
We take $f=\prod_{i=d+1}^{q-1}(y-y_i)$, where $y_i$ is the
$y$-coordinate of $P_i$.\\
 In both cases $a_0+b_0=q-2,~~a_1=0$ and
$a_0+b_0=q-2,~~b_1=0$, we have that the lower bound is $d=a_0+b_0+2$
by Proposition \ref{prop:mult}. Now we
show that the lower bound is sharp for both cases.\\
Case 2.
$a_0+b_0=q-2,~~a_1=0$\\
Since $G=K+b_1P_0$, we need to show that there exist
$P_1,\ldots,P_d$ with
\begin{eqnarray*}
&&L(P_1 + \cdots+ P_d-b_1P_0)\\
&&\neq L(-b_1P_0).
\end{eqnarray*}
Take $f=y/(y-x)$.\\
Case 3. $a_0+b_0=q-2,~~b_1=0$\\
Since $G=K+a_1P_\infty$, we need to show that there exist
$P_1,\ldots,P_d$ with
\begin{eqnarray*}
&&L(P_1 + \cdots+ P_d-a_1P_\infty)\\
&&\neq L(-a_1P_\infty).
\end{eqnarray*}
Take $f=x-1$.
\end{proof}

%--------------------------------------------------------------------
\section{Appendix}
%--------------------------------------------------------------------

In this section we give formulas for the minimum distance obtained
by Homma and Kim. Also, we give formulas for the
minimum distance obtained by our method for comparison.\\

\noindent {\bf Homma and Kim method}\\
Let $X$ be a Hermitian curve defined by $y^q+y=x^{q+1}$ over
$\mathbb{F}_{q^2}$. Let $P_\infty$ be the point at infinity of $X$
and $P_0$ the origin of $X$. We consider the code $C(m,n)$ in
$(\mathbb{F}_{q^2})^{q^3-1}$ defined by the image of the evaluation
map
\begin{eqnarray*}
L(mP_{\infty}+nP_0)&\longrightarrow&(\mathbb{F}_{q^2})^{q^3-1}\\
        f ~~~~~~~~ &\longmapsto&(f(P))_{P \in
        X(\mathbb{F}_{q^2})\setminus \{P_{\infty},~P_0\}}~,
\end{eqnarray*}
where $X(\mathbb{F}_{q^2})$ denotes the set of
$\mathbb{F}_{q^2}$-rational points of $X$. Our problem is to
determine the minimum distance of $C(m,n)$ for $0 \leq n \leq q$.\\
For $n=0$ the following theorem holds.
\begin{theorem}\cite[Theorem 5.2]{HommaKim4}
Let $m=aq+b=(q^2-\rho)q+b\in A(\mathbb{Z},q)$ with
 $b\leq a\leq
\min\{b+q^2-1, q^2+q-3\}$.
\begin{itemize}
\item[(i)] If $b\leq a\leq b+q^2-q-1$, then $d(C(m,0))=q^3-1-m$.
\item[(ii)] If $1\leq \rho\leq q$ and $0\leq b\leq q-\rho$, then
$d(C(m,0))=\rho q-1$.
\item[(iii)] If $q^2-1\leq a\leq \min\{b+q^2-1,q^2+q-3\}$, then
$d(C(m,0))=q^2+q-a-2$.
\end{itemize}
\end{theorem}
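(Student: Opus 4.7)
My plan is to deduce the Homma--Kim theorem from the dual-distance formulas in Theorems~\ref{thm:dist} and \ref{thm:below} via AG duality. Writing $C(m,0) = C_L(D,mP_\infty)$, the standard Goppa duality $C_L(D,mP_\infty)^\perp = C_L(D,G^\ast)$ with $G^\ast \sim D + W - mP_\infty$ (for a canonical $W$ coming from a differential with residues one at each point of $D$), combined with $C^{\perp\perp}=C$, gives
$$d(C(m,0)) \;=\; d\bigl(C_L(D,G^\ast)^\perp\bigr),$$
and the right-hand side is exactly the quantity the paper computes, once $G^\ast$ is represented in the two-point form $a^\ast P_\infty + b^\ast P_0$.

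First I will identify $G^\ast$ as a two-point divisor. From $(x^{q^2}-x) = D + P_0 - q^3 P_\infty$ one gets $D \sim q^3 P_\infty - P_0$. Combining this with the canonical representative $(q-2)(q+1)P_\infty$ and the $(q+1)$-torsion relation $(y) = (q+1)(P_0 - P_\infty)$ yields the concrete representative
$$G^\ast \;=\; (q^3+q^2-2q-m-3)\,P_\infty + q\,P_0.$$
In the parametrization $G^\ast = K + aP_\infty + bP_0$ with $K = (q-2)(q+1)P_\infty$ this reads $a = q^3-q-m-1$, $b = q$, so $b_0 = b_1 = 1$. Substituting $m = Aq+B$ with $0 \le B \le q-1$ (Homma--Kim's notation) and expanding $a = a_0(q+1) - a_1$ with $0 \le a_1 \le q$ translates Homma--Kim's ranges on $(A,B)$ into the regions of $(a_0,a_1,b_0,b_1)$ appearing in Theorems~\ref{thm:dist} and~\ref{thm:below}.

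Next I will match the three Homma--Kim cases to the cases of those theorems. Case~(i), $B \le A \le B + q^2 - q - 1$, lands in the generic region $0 \le a_1,b_1 \le a_0+b_0$ of Theorem~\ref{thm:dist}, whose formula reduces to $d^\ast = a+b = q^3-1-m$. For Case~(iii), the inequality $q^2-1 \le A$ forces $a$ to become small after normalisation by multiples of $(q+1)(P_0-P_\infty)$, placing $G^\ast$ in the low-degree regime of Theorem~\ref{thm:below}; rewriting its formula $a_0+b_0+2$ in the new coordinates gives $q^2+q-A-2$. Case~(ii), $1 \le \rho \le q$ and $0 \le B \le q - \rho$ (equivalently $m$ close to $q^3$), hits the sub-case of Theorem~\ref{thm:below} where $G^\ast \sim sP_0$, producing $d = \rho q - 1$. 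In each case the upper bound is realised by the explicit constructions already used in the proofs of Theorems~\ref{thm:dist} and~\ref{thm:below}: conics $x^2-\alpha y$ together with lines $y-\beta x, x-\gamma, y-\delta$ for Case~(i); the function $f = (x/y)\prod 1/(y-y_i)$ of the Case~4 argument for Case~(iii); and the functions $y/(y-x)$ and $x-1$ of Cases~2--3 of Theorem~\ref{thm:below} for Case~(ii).

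The main obstacle is the parameter bookkeeping. The class $[G^\ast]$ admits infinitely many two-point representatives differing by $(a,b) \mapsto (a-(q+1),\,b+(q+1))$, and the one for which the paper's hypotheses in Theorems~\ref{thm:dist}/\ref{thm:below} apply must be chosen carefully; in particular, tracking the transition across $\deg G^\ast = 2g-2$ and $\deg G^\ast = 2g-2+q$ as $(A,B)$ moves through Homma--Kim's three ranges requires a careful case split. Verifying the boundary subcases of Case~(ii) also requires confirming explicitly that the chosen $G^\ast$ lands in the exceptional linear equivalence $G^\ast \sim sP_0$ (or $sP_\infty$) that activates Theorem~\ref{thm:below} rather than the generic Theorem~\ref{thm:dist}.
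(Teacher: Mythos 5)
The paper does not actually prove this statement: it is Homma--Kim's own theorem, quoted in the comparison appendix with a citation, so there is no internal proof to match. What you propose --- recovering it from Theorems~\ref{thm:dist} and~\ref{thm:below} via Goppa duality --- is the natural way to make the paper's claim ``our formulas meet Homma and Kim's'' precise, and your setup is correct: from $(x^{q^2}-x)=D+P_0-q^3P_\infty$ and $(y)=(q+1)(P_0-P_\infty)$ one indeed gets $G^\ast\sim K+(q^3-q-m-1)P_\infty+qP_0$, hence $b_0=b_1=1$ and $d^\ast=q^3-1-m$, and case~(i) does land in Case~1 of Theorem~\ref{thm:dist}.

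The gap is in the case-matching for (ii) and (iii), which is exactly the ``bookkeeping'' you defer --- and the parts you do assert are wrong. For case~(ii), with $m=(q^2-\rho)q+B$ one gets $a=q^3-q-1-m=(\rho-1)(q+1)-(\rho+B)$, so $a_0=\rho-1$, $a_1=\rho+B$, $a_0+b_0=\rho$; moreover $m+1\equiv\rho+B\pmod{q+1}$ with $1\le\rho+B\le q$, so $G^\ast$ is \emph{never} linearly equivalent to $sP_0$ (and never to $sP_\infty$, since $G^\ast\sim(\cdot)P_\infty-P_0$). Hence Theorem~\ref{thm:below} is never activated in case~(ii); the correct route is Cases~1--2 of Theorem~\ref{thm:dist}, which give $d=d^\ast+a_1-(a_0+b_0)=\rho q-1$, and the sharpness witnesses are the conic/line products of that proof, not $y/(y-x)$ and $x-1$. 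Case~(iii) likewise splits: for $A=q^2-1$ one has $\deg K\le\deg G^\ast\le\deg K+q$ but $G^\ast\nsim sP_0,sP_\infty$, so it stays in Theorem~\ref{thm:dist} (Cases~1--2 again give $q-1=q^2+q-A-2$); only for $A\ge q^2$ does $\deg G^\ast$ drop below $\deg K$ and Theorem~\ref{thm:below} apply, yielding $a_0'=q-3-k$, $b_0'=0$ and $d=q-1-k=q^2+q-A-2$ for $A=q^2-1+k$. So the strategy is sound and the target identities do check out, but the proof as written assigns two of the three cases to the wrong theorem of the paper and leaves unproved precisely the range verification that constitutes the argument.
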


\noindent For $n$ with $1\leq n\leq q-1$, we have the following
theorems.

\begin{theorem}\cite{HommaKim4}, \cite{HommaKim3}, \cite{HommaKim2}
Fix an integer $n$ with $1\leq n\leq q-1$. Let $m=aq+b$ be a
nonnegative integer with $0\leq b< q$.
\begin{itemize}
\item[[I\hspace{-0.2cm}]] If $b\leq a\leq q-(n+1)$, then $d(C(m,n))=q^3-1-m$.
\item[[II\hspace{-0.2cm}]] If $m$ satisfies either
\begin{itemize}
\item[(i)] $0\leq b\leq q-2$ and $ q^2-1\leq a \leq b+q^2-1$, or
\item[(ii)] $b=q-1$ and $q^2-1\leq a\leq q^2+q-(n+3)$,
\end{itemize}
then $d(C(m,n))=q^2+q-a-2$.
\item[[III\hspace{-0.2cm}]] If $m$ satisfies either
\begin{itemize}
\item[(i)] $b=0$ and $q-n\leq a\leq q^2-(n+1)$, or
\item[(ii)] $1\leq b\leq q-2$ and $\max\{b,q-n\}\leq a \leq\min\{b+q^2-
(q+1),q^2-(n+2)\}$, or
\item[(iii)] $b=q-1$ and $q-(n+1)\leq a\leq q^2-(n+2)$, then
$d(C(m,n))=q^3-1-(m+n)$.
\end{itemize}
\end{itemize}
\end{theorem}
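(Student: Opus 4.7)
My plan is to deduce the Homma--Kim three-case formula for $d(C(m,n))$ directly from Theorems~\ref{thm:dist} and~\ref{thm:below}, after converting the primal code $C(m,n)=C_L(D,mP_\infty+nP_0)$ into a dual of the form $C_L(D,G^*)^\perp$ to which those theorems apply. Because the paper already settles the minimum distance of every two-point dual code on the Hermitian curve, what remains is an algebraic translation of parameters followed by a case-by-case matching of Homma--Kim's cases~[I], [II], [III] with the appropriate cases in Theorems~\ref{thm:dist} and~\ref{thm:below}.

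First, I would set up the duality by fixing the differential $\omega=dx/(x^{q^2}-x)$ on the Hermitian curve. A direct computation, using that $\prod_{\gamma\in\mathbb{F}_{q^2}^\ast}\gamma=-1$, shows that $\omega$ has a simple pole with residue $-1$ at every point of $D\cup\{P_0\}$ and a zero of order $q^3+q^2-q-2$ at $P_\infty$, so
\[
(\omega)\ =\ (q^3+q^2-q-2)\,P_\infty-D-P_0.
\]
Since multiplication by $\omega$ rescales each coordinate by a nonzero constant, the standard identification $C_L(D,G)^\perp=C_\Omega(D,G)$ yields
\[
C(m,n)\ \cong\ C_L(D,G^*)^\perp\qquad\text{with}\qquad G^*\ \sim\ (q^3+q^2-q-2-m)P_\infty-(n+1)P_0.
\]
Writing $G^*=K+aP_\infty+bP_0$ with $K=(q-2)(q+1)P_\infty$ gives $a=q^3-m$ and $b=-(n+1)$, hence $d^*=a+b=q^3-1-m-n$, the Goppa baseline.

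Second, I would translate Homma--Kim's base-$q$ parametrisation $m=Aq+B$ into Park's expansion $a=a_0(q+1)-a_1$. Using $q\equiv-1\pmod{q+1}$ one gets $a_1\equiv B-A+1\pmod{q+1}$ with $0\le a_1\le q$, while the $P_0$-side is fixed by $b_0=0,\ b_1=n+1\in[2,q]$. With this dictionary I would identify each Homma--Kim case with a case of Park:
\begin{itemize}
\item Case~[III] (the generic middle range) forces $0\le a_1,b_1\le a_0+b_0$, putting us in case~1 of Theorem~\ref{thm:dist}, and $d=d^*=q^3-1-(m+n)$ drops out immediately.
\item Case~[II] ($a$ near the top) falls into cases~2, $2'$, or~4 of Theorem~\ref{thm:dist}; in each subcase the correction term given by the remark after that theorem simplifies to $q^2+q-a-2-d^*$, so adding it to $d^*$ recovers the Homma--Kim value $q^2+q-a-2$.
\item Case~[I] ($m$ small) lands in Theorem~\ref{thm:below}, and substituting the translation yields $a_0+b_0+2=q^3-1-m$.
\end{itemize}

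The main obstacle is the matching of boundaries: Homma--Kim's subcases are cut out by sharp linear inequalities on $(A,B,n)$, whereas Park's cases are cut out by inequalities on $(a_0,a_1,b_0,b_1)$, and these must coincide exactly through the nonlinear interaction of the two parametrisations. Particular care is needed at the corners $B=q-1$ in~[II](ii) and at the three-way split in~[III] around the diagonal $a_1=b_1$. A secondary point is that after translation the coefficient $b=-(n+1)$ is negative, so one must verify that the multiplicity computations of Proposition~\ref{prop:mult} and the conic/line constructions of Lemma~\ref{conD} and Remark~\ref{linecond}, which were set up for the effective range, still apply with $b_0=0$ and $b_1=n+1$; this reduces to the observation that negative coefficients at $P_0$ merely impose vanishing conditions on $f$ at $P_0$ and are already accommodated by the framework.
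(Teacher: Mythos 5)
First, a point of reference: the paper does not prove this theorem. It is quoted from Homma and Kim in the Appendix purely for comparison with Theorems~\ref{thm:dist} and~\ref{thm:below}, so there is no internal proof to measure your argument against; you are proposing a new derivation. Your duality set-up is correct: for $\omega=dx/(x^{q^2}-x)$ one indeed has $(\omega)=(q^3+q^2-q-2)P_\infty-D-P_0$, hence $C(m,n)=C(D,G^*)^\perp$ with $G^*\sim(q^3+q^2-q-2-m)P_\infty-(n+1)P_0$ and $d^*=q^3-1-m-n$. The case-matching, however, does not work. In case~[I] one has $m\le (q-2)(q+1)$ and $n\le q-1$, so $\deg G^*=2g-2+q^3-1-m-n$ is far \emph{above} $\deg K+q$; Theorem~\ref{thm:below} is therefore inapplicable, and your claim that case~[I] ``lands in Theorem~\ref{thm:below}'' with $a_0+b_0+2=q^3-1-m$ cannot be right ($a_0+b_0$ is of order $q^2$, not of order $q^3-m$). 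What the translation actually produces is $a_0+b_0\approx q^2\gg a_1,b_1$, i.e.\ case~1 of Theorem~\ref{thm:dist}, whose output is $d^*=q^3-1-(m+n)$ --- short of the Homma--Kim value $q^3-1-m$ by exactly $n$. Symmetrically, case~[II] has $m\ge q^3-q$, so $\deg G^*\le\deg K+q-2$ and it is Theorem~\ref{thm:below}, not ``cases 2, $2'$, or 4 of Theorem~\ref{thm:dist},'' that governs that range.

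The missing idea in case~[I] is the pure-gap phenomenon: for $m=Aq+B$ with $B\le A\le q-(n+1)$ one has $L(mP_\infty+jP_0)=L(mP_\infty)$ for $0\le j\le n$, so $C(m,n)=C(m,0)$ and the relevant degree is $m$, not $m+n$. No reparametrisation of $G^*$ recovers this, because the formula in Theorem~\ref{thm:dist} is invariant under the shifts $a\mapsto a\pm(q+1)$, $b\mapsto b\mp(q+1)$. Worse, this regime is a genuine blind spot of Theorem~\ref{thm:dist} as stated: for $q=3$ and $(m,n)=(4,1)$ one gets $G^*\sim 23P_\infty+2P_0=K+19P_\infty+2P_0$, so $(a_0,a_1,b_0,b_1)=(5,1,1,2)$ and case~1 predicts $d^*=21$, yet $C(4,1)$ is the evaluation of $L(4P_\infty+P_0)=L(4P_\infty)=\langle 1,x,y\rangle$ on the $26$ points of $D$, and every $c_0+c_1x+c_2y$ has at most $4$ zeros there, so the true minimum distance is $22$ (in agreement with Homma--Kim). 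The shift-bound lower bound $21$ is valid but the paper's sharpness construction fails here (one cannot choose $a_0+b_0-b_1=4$ lines $y=\delta_i$ disjoint from the line $y=\beta x$ when only $q(q-1)=6$ admissible $\delta$ exist). So the programme of reading Homma--Kim off from Theorems~\ref{thm:dist} and~\ref{thm:below} cannot succeed without first supplying the reduction $C(m,n)=C(m,0)$ and its analogues at the other boundaries --- which is precisely the nontrivial content of Homma and Kim's case analysis, not a corollary of the paper's results.
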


\noindent In order to describe $d(C(m,n))$ for remaining $m$, we put
$m=(q^2-\rho)q+b$ for convenience.

\begin{theorem}\cite[Theorem 1.4]{HommaKim1}
Fix an integer $n$ with $1\leq n\leq q-1$. Let $m=(q^2-\rho)q+b$ be
an integer with $0\leq b<q$.
\begin{itemize}
\item[[IV\hspace{-0.2cm}]] If $1\leq b$, $n+1\leq \rho$ and
$\rho+b\leq q$, then $d(C(m,n))=\rho q-(n+1)$.
\item[[V\hspace{-0.2cm}]] If $\rho\leq n+1$ and $q<\rho+b$, then
$d(C(m,n))=\rho(q-1)-(b-1)$.
\item[[VI\hspace{-0.2cm}]] Assume that $2\leq \rho\leq n$ and
$\rho+b\leq q$.
\begin{itemize}
\item[[VI-1\hspace{-0.2cm}]] If either ``$n\leq q-2$'' or
``$n=q-1$ and $\rho+b< q$'', then $d(C(m,n))=\rho(q-1)$.
\item[[VI-2\hspace{-0.2cm}]] If $n=q-1$ and $\rho+b=q$, then
$d(C(m,n))=(\rho-1)q$.
\end{itemize}
\end{itemize}
\end{theorem}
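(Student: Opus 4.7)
The plan is to derive the Homma--Kim formulas [IV]--[VI-2] for the minimum distance of the primal code $C(m,n)=C_L(D,mP_\infty+nP_0)$ by invoking duality of AG codes and then applying the author's Theorem~\ref{thm:dist} (or, in one boundary strip, Theorem~\ref{thm:below}). Three divisor--theoretic facts on the Hermitian curve drive the reduction: the function $x^{q^2}-x$ has divisor $D+P_0-q^3P_\infty$, giving $D\sim q^3P_\infty-P_0$; the divisor of $y$ gives $(q+1)P_\infty\sim(q+1)P_0$; and $K\sim(q-2)(q+1)P_\infty$. Combined with the standard duality $C_L(D,G)^\perp\cong C_L(D,K+D-G)$ (diagonal equivalence preserves minimum distance), these identities produce
\[
d(C(m,n))=d\bigl(C(D,K+aP_\infty+b'P_0)^\perp\bigr)
\]
after a single use of $H_\infty\sim H_0$ to absorb the negative coefficient $-(n+1)$ at $P_0$, yielding $a=(\rho-1)q-(b+1)$ and $b'=q-n$.

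Next I would recast $(a,b')$ in the author's notation. Writing $a=a_0(q+1)-a_1$ and $b'=b_0(q+1)-b_1$ with $0\le a_1,b_1\le q$ gives
\[
(a_0,a_1,b_0,b_1)=\begin{cases}(\rho-1,\,\rho+b,\,1,\,n+1), & \rho+b\le q,\\(\rho-2,\,\rho+b-q-1,\,1,\,n+1), & \rho+b>q,\end{cases}
\]
and $d^{*}=a+b'=\rho q-b-n-1$ in both sub--cases. A short bookkeeping then matches each Homma--Kim case to a case of Theorem~\ref{thm:dist}: Case [IV] lands in Case~2 ($b_1\le a_0+b_0<a_1$), yielding $d^{*}+a_1-(a_0+b_0)=\rho q-(n+1)$. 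Case [V] lands in Case~$2'$ ($a_1<a_0+b_0<b_1$), yielding $d^{*}+b_1-(a_0+b_0)=\rho(q-1)-(b-1)$. Case [VI-1] splits: $b=0$ falls into Case~$2'$ and $b\ge 1$ into Case~3 or $3'$, both collapsing to $\rho(q-1)$. Case [VI-2] sends us to Case~4 with $a_1=b_1=q$ and $a_0+b_0=\rho<q$, producing $(\rho-1)q$.

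The main obstacle is a single boundary configuration in which Theorem~\ref{thm:dist} fails to apply because the reduced divisor $K+aP_\infty+b'P_0$ is linearly equivalent to a one--point divisor while simultaneously $\deg G\le\deg K+q$. Writing $a=(\rho-1)(q+1)-(\rho+b)$ shows $a\equiv 0\pmod{q+1}$ iff $\rho+b=q+1$, at which point the divisor collapses to a multiple of $P_0$. The degree restriction $d^{*}\le q$ then forces $\rho=2$ and $b=q-1$, i.e.\ $m=q^3-q-1$, a sub--strip of Case [V]. Here $a=0$ and $G'\sim(q^2-n-2)P_0$; decomposing $q^2-n-2=(q-2)(q+1)+(q-n)$ in the parameters of Theorem~\ref{thm:below} gives $a_0'=0$, $b_0'=q-2$, and the formula $a_0'+b_0'+2=q$ coincides with the Homma--Kim value $\rho(q-1)-(b-1)=q$. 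A symmetric verification (with the roles of $P_\infty$ and $P_0$ interchanged via $G\sim sP_\infty$) is vacuous here, since $b'=q-n\in\{1,\dots,q-1\}$ is never a multiple of $q+1$.
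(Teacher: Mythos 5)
Your proposal is essentially correct, but it is worth noting that the paper itself does not prove this statement at all: Theorem 1.4 of Homma--Kim is quoted in the Appendix purely for comparison, and the agreement with the author's own formulas is only exhibited numerically in the tables. What you do instead is supply the missing analytic bridge: using $D\sim q^3P_\infty-P_0$, $(q+1)P_\infty\sim(q+1)P_0$ and $C_L(D,G)^\perp\cong C_L(D,K+D-G)$, you rewrite $C(m,n)$ as $C(D,K+aP_\infty+b'P_0)^\perp$ with $a=(\rho-1)(q+1)-(\rho+b)$, $b'=(q+1)-(n+1)$, and then read off Homma--Kim's cases [IV]--[VI-2] from the author's Theorem~\ref{thm:dist} (Cases 2, $2'$, 3/$3'$, 4 respectively), with Theorem~\ref{thm:below} covering the single strip $\rho+b=q+1$, $d^*\le q$ (forced to $\rho=2$, $b=q-1$, where both sides give $q$). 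I checked your arithmetic: $d^*=\rho q-b-n-1$, and each case reproduces the Homma--Kim value; your identification of the exceptional configuration (the only way $K+aP_\infty+b'P_0$ can be equivalent to a one-point divisor, since $(q+1)\nmid b'$ always) is also right. This buys a genuine proof of the equivalence of the two sets of formulas, which the paper only asserts. Two small caveats: first, in [VI-1] with $n=q-1$ and $\rho+b<q$ you have $b_1=q>a_1$, and with $n\le q-2$ and $\rho+b=q$ you have $a_1=q>b_1$; neither configuration is literally covered by Cases 3, $3'$ or 4 as stated in Theorem~\ref{thm:dist} (they require $a_1,b_1<q$ or $a_1=b_1=q$), so you must appeal to the closed form in the Remark following that theorem (or observe that Case 4's separate treatment is needed only when $a_1=b_1=q$). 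Second, you should say a word confirming $d^*\ge 0$ in all cases (it is: $d^*\ge q-n\ge 1$), so that the hypotheses of Theorem~\ref{thm:dist} are indeed available outside the exceptional strip. Neither point threatens the argument.
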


%%For each $n$ with $0\leq n\leq q$, we denote by
%%\begin{eqnarray*}
%%J_n&:=&\{m+q^3|m\in \tilde{I}_{n+1}\}\\
%%&=&\{aq+b\in A(\mathbb{Z},q)|b+q^2\leq
%%a\}\cup\{aq+(q-1)|q^2+q-n-2\leq a\}.
%%\end{eqnarray*}
%%Note that there is a successive inclusion $J_0\subset J_1\subset
%%\ldots \subset J_q$.

We denote by $A(\mathbb{Z},q)$ the array of integers with the
infinite length of column
\begin{equation*}
\begin{array}{cccc}
\vdots & \vdots & {} &\vdots\\
-q & -q+1&\ldots & -q+(q-1)\\
0 & 1& \ldots & (q-1)\\
q & q+1 & \ldots & q+(q-1)\\
2q & 2q+1 & \ldots & 2q+(q-1)\\
\vdots & \vdots & {} &\vdots
\end{array}
\end{equation*}
Let
\begin{equation*}
\tilde{I}_q =\{aq +b \in A(\mathbb{Z},q)|b\leq a\}\cup \{aq
+(q-1)|0\leq a\leq q-2\}\cup \{-1\}
\end{equation*}
and
\begin{equation*}
J_q=\{aq+b\in A(\mathbb{Z},q)|b+q^2\leq a\}\cup\{aq+(q-1)|q^2-2\leq
a\}.
\end{equation*}

\noindent The following is the formula for the minimum distance of
$C(m,q)$.

\begin{theorem}\cite[Theorem 6.1]{HommaKim4}
Let $m=aq+b ~(0\leq b < q)$ be an integer in $\tilde{I}_q\backslash
J_q$.
\begin{itemize}
\item[(A)] If $m$ satisfies either
\begin{itemize}
\item[(i)] $0\leq b\leq q-2$ and $b\leq a\leq b+q^2-q-1$, or
\item[(ii)] $b=q-1$ and $-1\leq a\leq q^2-3$,
\end{itemize}
then $d(C(m,q))=q^3-q-m-1$.
\item[(B)] If $m$ satisfies the condition
$$
b+q^2-q\leq a\leq q^2-2,
$$
then $d(C(m,q))=(q^2-a-1)q$.
\item[(C)] If $m$ satisfies the condition
$$
0\leq b\leq q-2~~~\mbox{and}~~~q^2-1\leq a\leq b+q^2-1,
$$
then $d(C(m,q))=q^2+q-a-2$.
\end{itemize}
\end{theorem}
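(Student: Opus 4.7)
The plan is to derive Theorem 6.1 from Theorems 3.3 and 3.5 of this paper via Goppa--residue duality on the Hermitian curve. The paper's theorems compute $d(C(D,G)^\perp)$, whereas Homma--Kim's statement concerns $d(C(m,q)) = d(C_L(D', mP_\infty + qP_0))$ itself; these are linked by the standard identification $C_L(D',G)^\perp = C_L(D', K+D'-G)$ for a suitable representative of the canonical class.

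First I would compute the dual divisor class explicitly. The standard one-point Hermitian formula $K + D_1 \sim (q^3+q^2-q-2)P_\infty$, where $D_1 = \sum_{P \neq P_\infty} P$, combined with $D' = D_1 - P_0$ and the principal divisor relation $(q+1)(P_0 - P_\infty) \sim 0$, yields $K + D' \sim (q^3+q^2-2q-3)P_\infty + qP_0$. Hence $C(m,q) = C_L(D', G_0)^\perp$ with $G_0 \sim (q^3+q^2-q-2-m)P_\infty - (q+1)P_0$, and by adding appropriate multiples of $(q+1)(P_0-P_\infty)\sim 0$ one obtains an effective representative, typically the one-point form $G_0 \sim sP_\infty$ for a suitable $s$ depending on $m$ modulo $q+1$.

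Then I would split by the degree regime of $G_0$ and apply the paper's Theorem 3.3 or 3.5. In Case (A), where $m$ is small enough that $\deg G_0 > \deg K + q$, the one-point representation of $G_0$ gives a block decomposition with $b_0 = b_1 = 0$ and $a_1 \leq a_0$; Case 1 of Theorem 3.3 applies and yields $d = d^* = q^3 - q - m - 1$. In Case (B), $\deg G_0$ lies in the boundary range $[\deg K, \deg K + q]$; writing $G_0 \sim sP_\infty$ and applying Case 2 of Theorem 3.3 (where $a_1 > a_0 + b_0$), the correction term $d^* + a_1 - (a_0+b_0)$ simplifies by direct calculation to $(q^2-a-1)q$ after substituting $m = aq+b$. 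In Case (C), $\deg G_0 < \deg K$ and Theorem 3.5 applies, giving $d = a_0 + b_0 + 2 = q^2 + q - a - 2$ after the block decomposition of the one-point representative.

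The main obstacle is confirming that the hypothesis $m \in \tilde{I}_q \setminus J_q$ corresponds exactly to the regimes in which the paper's theorems apply cleanly -- in particular, checking that the one-point representative $G_0 \sim sP_\infty$ does not fall into the exceptional subcase of Theorem 3.3 (the range $\deg K \leq \deg G \leq \deg K + q$ with $G \sim sP_\infty$), which would force a switch to Theorem 3.5 with a different block index. Once the correspondence between Homma--Kim's pair $(a,b)$ and the paper's block indices $(a_0, a_1)$ (with $b_0 = b_1 = 0$ in the one-point representation) is established, the three distance formulas follow by routine arithmetic: the block decomposition of $s$ modulo $q+1$ is determined by $m$, which in turn is $aq+b$, and the case boundaries of Theorem 3.3 translate directly into the inequalities defining Cases (A), (B), (C).
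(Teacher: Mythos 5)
First, a point of orientation: the paper does not prove this statement at all. It is Homma and Kim's Theorem~6.1, quoted verbatim in the Appendix solely so that the reader can compare it (via the numerical tables) with the author's own Theorems~3.3 and~3.5; the only proof on record is Homma--Kim's, which uses the Kumar--Yang method and is not reproduced here. Your proposal is therefore not a reconstruction of the paper's argument but an independent derivation of Homma--Kim's formula from the paper's main results via the duality $C_L(D,G)^\perp = C_L(D,K+D-G)$. That is a legitimate and genuinely different route, and it is in effect the consistency check that the paper asserts (``our formulas \dots meet the formulas by Homma and Kim'') but never writes out. Your computation of the dual class, $K+D \sim (q^3+q^2-2q-3)P_\infty + qP_0$, is correct, and since $n=q$ here the dual divisor $G_0$ is linearly equivalent to $sP_\infty$ with $s=q^3+q^2-2q-3-m$, so the block indices satisfy $b_0=b_1=0$ and everything reduces to arithmetic in $s$ modulo $q+1$.

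The one genuine problem is your treatment of case (B), which as written is internally inconsistent: you place $\deg G_0$ in the range $[\deg K,\deg K+q]$ and then invoke Case~2 of Theorem~3.3, but hypothesis (2) of Theorem~3.3 explicitly excludes divisors with $G\sim sP_\infty$ in that range---which is exactly the situation here. In fact case (B) straddles the two theorems: for $a$ near the lower end $b+q^2-q$ (and $b$ small) one has $\deg G_0>\deg K+q$, so Theorem~3.3 Case~2 applies with $b_0=b_1=0$ and yields $d=d^*+a_1-a_0=a_0q$; for $a$ near $q^2-2$ one has $\deg G_0\le\deg K+q$ with $G_0\sim sP_\infty$, so Theorem~3.5 applies and yields $d=a_0'+2$ where $s=a_0'(q+1)+a_1'$. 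Both expressions must be matched separately against $(q^2-a-1)q$; they do agree (e.g.\ for $q=4$, $b=0$: $a=12$ gives $a_0q=12=(q^2-a-1)q$, while $a=14$ gives $a_0'+2=4=(q^2-a-1)q$), but your write-up collapses the two regimes into one and so, as stated, does not constitute a proof of (B). The remaining cases (A) and (C), and the translation of Homma--Kim's inequalities on $(a,b)$ into the conditions $a_1\le a_0$, etc., are indeed routine, but that bookkeeping is precisely where the content of the verification lies and none of it is actually carried out in the proposal.
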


\noindent{\bf Formulas using our method}
%We state the formulas for the minimum distance obtained by using the
%new method.
\begin{theorem}\label{dist}
Suppose that $G$ satisfies either
\begin{itemize}
\item[$(1)$]$\rm{deg} G > \rm{deg} K + q$ or
\item[$(2)$]$\rm{deg} K \leq \rm{deg} G \leq \rm{deg} K+q ~\text{and}~ G \nsim sP_\infty ~\text{and} ~G \nsim tP_0$ for all $s,t \in \mathbb{Z}$.
\end{itemize}
Let $G = K+aP_\infty+bP_0$, where $K$ is a
canonical divisor,
\begin{eqnarray*}
a&=& a_0 (q+1)-a_1,~~~~0\leq a_1\leq q,\\
b&=& b_0 (q+1)-b_1,~~~~0\leq b_1\leq q.
\end{eqnarray*}
Let $d^*=\deg(G) -(2g-2)=a+b$.
Then
$$
d(C(D,G)^\perp)=d^*+\max\{0,a_1-(a_0+b_0),
b_1-(a_0+b_0),a_1+b_1-2(a_0+b_0)\},
$$
except for the case when $(a_0+b_0<a_1,b_1$ and $a_1=q$, $b_1=q)$,
for which
$$d(C(D,G)^\perp)=d^*+q-(a_0+b_0).$$
\end{theorem}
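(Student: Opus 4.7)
The plan is to deduce Theorem \ref{dist} as a direct reformulation of Theorem \ref{thm:dist}, which has already been established, by checking that the compact $\max$ expression agrees with each of the cases $1, 2, 2', 3, 3'$ listed there. The strategy is pure case analysis on the sign pattern of the three quantities
\[
\alpha = a_1 - (a_0+b_0), \qquad \beta = b_1 - (a_0+b_0), \qquad \alpha + \beta = a_1+b_1-2(a_0+b_0),
\]
together with an explicit treatment of the exceptional case $a_1 = b_1 = q$.

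First I would observe that Case $1$ of Theorem \ref{thm:dist} is exactly the region where $\alpha \leq 0$ and $\beta \leq 0$, so all three candidates in the $\max$ are non-positive and the formula gives $d^*$, matching $d(C(D,G)^\perp)=d^*$. In Case $2$ we have $\beta \leq 0 < \alpha$, hence $\alpha + \beta \leq \alpha$, so the maximum equals $\alpha = a_1-(a_0+b_0)$, producing $d^*+a_1-(a_0+b_0)$; Case $2'$ is identical after swapping $\alpha \leftrightarrow \beta$. In Cases $3$ and $3'$ both $\alpha,\beta > 0$, so $\alpha+\beta$ is the largest of the three candidates and the $\max$ yields $a_1+b_1-2(a_0+b_0)$, which again agrees with Theorem \ref{thm:dist}.

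The only subtlety is why Case $4$ has to be broken out of the formula. Here $a_1=b_1=q$ and $a_0+b_0 < q$, so $\alpha = \beta = q-(a_0+b_0) > 0$ and $\alpha+\beta = 2(q-(a_0+b_0))$. The $\max$ would then equal $2(q-(a_0+b_0))$, giving a strictly larger value than the actual minimum distance $d^*+q-(a_0+b_0)$ established in Theorem \ref{thm:dist}. Recording this discrepancy and isolating it as the stated exception is the whole content of the deduction.

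There is no genuine obstacle here: the deepest work (the multiplicity lower bound from the shift argument in Theorem \ref{shift}, and the sharpness constructions with conics and lines in the proof of Theorem \ref{thm:dist}) has already been carried out. The proof of Theorem \ref{dist} is therefore just the bookkeeping above, and I would write it as a two- or three-line argument referring back to Theorem \ref{thm:dist}.
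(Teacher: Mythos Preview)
Your proposal is correct and matches the paper's own treatment: Theorem~\ref{dist} is simply the compact reformulation of Theorem~\ref{thm:dist} already recorded in Remark~3.4, and the paper offers no further argument beyond that observation. Your case-by-case verification that the $\max$ expression reproduces Cases $1,2,2',3,3'$ and overshoots in Case~$4$ is precisely the content of that remark.
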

\begin{theorem}\label{below}
Suppose that $G$ satisfies either
\begin{itemize}
\item[$(1)$]$\rm{deg} G < \rm{deg} K$ or
\item[$(2)$]$\rm{deg} K \leq \rm{deg} G \leq \rm{deg} K+q ~\text{with}~ G \sim sP_\infty ~\text{or}~ G \sim tP_0$ for some $s,t \in \mathbb{Z}$.
\end{itemize}
If $G = aP_\infty+bP_0$ with
\begin{eqnarray*}
a&=& a_0 (q+1)+a_1,~~~~0\leq a_1\leq q,\\
b&=& b_0 (q+1)+b_1,~~~~0\leq b_1\leq q.
\end{eqnarray*}
%If
%\begin{eqnarray*}
%&(1)&~ a_0+b_0\leq q-3,~~\text{or}\\
%&(2)&~ a_0+b_0=q-2,~~a_1=0,~~\text{or}\\
%&(3)&~ a_0+b_0=q-2,~~b_1=0,
%\end{eqnarray*}
then $$d(C(D,G)^\perp)=a_0+b_0+2.$$
\end{theorem}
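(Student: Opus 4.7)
The plan is to reduce to three exhaustive subcases and handle each by the shift-bound machinery of Section~\ref{multishift} for the lower bound, then exhibit an explicit codeword achieving equality. After fixing $K=(q-2)(q+1)P_\infty$ and using $H_\infty\sim H_0$, the hypothesis splits into: (i) $a_0+b_0\le q-3$, covering $\deg G<\deg K$; (ii) $a_0+b_0=q-2$ with $a_1=0$, covering $G\sim tP_0$; and (iii) $a_0+b_0=q-2$ with $b_1=0$, covering $G\sim sP_\infty$.

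For case (i), I sandwich $G$ between two auxiliary divisors $G_1=a_0H_\infty+b_0H_0$ and $G_2=G_1+qP_\infty+qP_0$, where $H_0=(q+1)P_0$. Since $0\le a_1,b_1\le q$ we have $G_1\le G\le G_2$, so
$$
C(D,G_2)^\perp\subseteq C(D,G)^\perp\subseteq C(D,G_1)^\perp.
$$
Applying Proposition~\ref{prop:mult} along any path from $G_1$ up to the full space $\mathbb{F}_{q^2}^n$ produces multiplicities all at least $a_0+b_0+2$, giving $d(C(D,G_1)^\perp)\ge a_0+b_0+2$, which transfers to $C(D,G)^\perp$. For sharpness I produce a word in $C(D,G_2)^\perp$ of weight exactly $d=a_0+b_0+2$ via the standard criterion that such a word supported on $\{P_1,\dots,P_d\}$ exists iff $L(K+D-G_2+\sum P_i)\ne L(K+D-G_2)$. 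I choose the $P_i$ to be $d$ of the $q+1$ rational points on a line through both $P_\infty$ and $P_0$ and take as witness the function $f=\prod_{i=d+1}^{q-1}(y-y_i)$, with $y_i$ the $y$-coordinate of the remaining $P_i$ on that line.

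Cases (ii) and (iii) are shorter: Proposition~\ref{prop:mult} again yields the lower bound $a_0+b_0+2$ directly (here equal to $q$), and sharpness follows from the explicit witnesses $f=y/(y-x)$ in case (ii) and $f=x-1$ in case (iii); each is tailored so that the requisite Riemann-Roch comparison $L(G'+\sum P_i)\ne L(G')$ holds, yielding a codeword of the exact target weight. The main obstacle is the divisor bookkeeping for case (i): one has to confirm that the chosen line through $P_0$ and $P_\infty$ actually meets $X$ in $q+1$ distinct $\mathbb{F}_{q^2}$-rational points (which is where Remark~\ref{linecond} enters), and that $\prod(y-y_i)$ has divisor lying precisely in the smaller Riemann-Roch space but outside the one with $\sum_{i=1}^{q-1}P_i$ removed, so that the resulting codeword has weight exactly $a_0+b_0+2$ rather than something smaller.
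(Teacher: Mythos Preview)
Your proposal is correct and follows essentially the same approach as the paper: the identical three-case split, the same sandwich $G_1\le G\le G_2$ in case~(i), the same choice of $P_1,\dots,P_d$ on a line through $P_0$ and $P_\infty$ with witness $f=\prod_{i=d+1}^{q-1}(y-y_i)$, and the same explicit functions $y/(y-x)$ and $x-1$ for cases~(ii) and~(iii). The only minor slip is in the stated Riemann--Roch criterion for sharpness in case~(i): the condition should be $L(K-G_2+\sum P_i)\ne L(K-G_2)$ (equivalently $\Omega(G_2-\sum P_i)\ne\Omega(G_2)$), not $L(K+D-G_2+\sum P_i)\ne L(K+D-G_2)$, but your choice of points and witness function are exactly right regardless.
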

\begin{example}Let $X$ be a Hermitian curve defined by
$y^8 + y = x^9$ over $\mathbb{F}_{64}$. Let $K$ be a canonical
divisor and $G=mP_{\infty}+nP_0$. We consider the code
$C(m,n)^\perp$. We give two tables with $\rm{deg}G < \rm{deg}K$ and $\rm{deg}G >
\rm{deg}K+q$. The rows represent $m$ and the columns represent
$n$. The entries of the first matrix are the minimum distance of
$C(m,n)^\perp$ and the entries of the second matrix state which
formula from Homma and Kim were used to find the minimum distance.
If $G=82P_{\infty}+3P_0$ then the minimum distance of
$C(82,3)^{\perp}=35$ and since the $(82,3)$ entry of the second
matrix is 361 it means Theorem 6.3 VI-1 was used to find the minimum
distance. The entries with zero mean that it is not in the range of
Homma and
Kim's formula.\\

Table \ref{tab:first} is an example of cases when $\rm{deg}G < \rm{deg}K$.

\begin{table}[h]
\begin{center}
\renewcommand{\arraystretch}{1.5}
\begin{tabular}{|ccccccccc|c|p{0.4cm}p{0.4cm}p{0.4cm}p{0.4cm}p{0.4cm}p{0.4cm}p{0.4cm}p{0.4cm}p{0.4cm}|}
\cline{1-9}\cline{11-19}
~4&~4&~4&~4&~4&~4&~4&~4&~4~~&~&~0&~~0&~~0&~~0&~~0&~~0&~~0&~~0&~0\\
~4&~4&~4&~4&~4&~4&~4&~4&~4~~&~&~0&~~0&~~0&~~0&~~0&~~0&~~0&~~0&~0\\
~4&~4&~4&~4&~4&~4&~4&~4&~4~~&~&~0&~~0&~~0&~~0&~~0&~~0&~~0&~~0&~0\\
~4&~4&~4&~4&~4&~4&~4&~4&~4~~&~&~0&~~0&~~0&~~0&~~0&~~0&~~0&~~0&~0\\
~4&~4&~4&~4&~4&~4&~4&~4&~4~~&~&~0&~~0&~~0&~~0&~~0&222&222&222&13\\
~4&~4&~4&~4&~4&~4&~4&~4&~4~~&~&~0&221&221&221&221&221&221&221&13\\
~4&~4&~4&~4&~4&~4&~4&~4&~4~~&~&~0&221&221&221&221&221&221&221&13\\
~4&~4&~4&~4&~4&~4&~4&~4&~4~~&~&~0&221&221&221&221&221&221&221&13\\
~4&~4&~4&~4&~4&~4&~4&~4&~4~~&~&~0&221&221&221&221&221&221&221&13\\
\cline{1-9}\cline{11-19}
\end{tabular}
\end{center}
\caption{Minimum
distance for codes $C(m,n)^\perp$ when $m=18, \ldots , 26$ and $n=0,\ldots, 8.$ } \label{tab:first}
\end{table}

Table \ref{tab:second} is an example of cases when $\rm{deg}G >
\rm{deg}K+q$. The matrices are given in 10 by 10 for easy comparison.
 Homma and Kim used $n=0,\ldots,q$ and
$m=aq + b$ where $b=0,\ldots,q$ which is the upper left 9 by 9
matrix. We used $m=a_0(q+1)-a_1$ and $n=b_0(q+1)-b_1$, where $ 0
\leq a_1, b_1 \leq q $ which is the
lower right 9 by 9 matrix.

\begin{table}[h]
\begin{center}
\renewcommand{\arraystretch}{1.5}
%\hspace{-0.3cm}
\begin{tabular}{|p{0.33cm}|p{0.33cm}p{0.33cm}
p{0.33cm}p{0.33cm}p{0.33cm}p{0.33cm}p{0.33cm}p{0.33cm}|p{0.33cm}|c|p{0.49cm}|p{0.35cm}p{0.35cm}
p{0.35cm}p{0.35cm}p{0.35cm}p{0.35cm}p{0.35cm}p{0.43cm}|p{0.49cm}|}
\cline{1-10}\cline{12-21}
\multicolumn{2}{|p{0.33cm}}{27~~~32}&32&32&32&32&33&34&35&36&&
\multicolumn{2}{|p{0.49cm}}{411~~~~35}&~35&~35&~35&232&232&232&~11&411\\
\cline{2-10}\cline{13-21}
32&32&35&35&35&36&37&38&39&40&&~42&362&361&361&~34&~34&~34&~34&~12&~42\\
32&35&35&35&35&36&37&38&39&40&&~42&361&361&361&~34&~34&~34&~34&~12&~42\\
32&35&35&35&35&36&37&38&39&40&&~42&361&361&361&~34&~34&~34&~34&~12&~42\\
32&35&35&35&35&36&37&38&39&40&&~42&361&361&361&231&231&231&231&~12&412\\
32&36&36&36&36&37&38&39&40&41&&412&~35&~35&~35&233&233&233&233&~11&411\\
33&37&37&37&37&38&39&40&41&42&&411&~35&~35&~35&232&232&232&232&~11&411\\
34&38&38&38&38&39&40&41&42&43&&411&~35&~35&~35&232&232&232&232&~11&411\\
35&39&39&39&39&40&41&42&43&44&&411&~35&~35&~35&232&232&232&232&~11&411\\
\cline{1-9}\cline{12-20}
36&40&40&40&40&41&42&43&\multicolumn{2}{p{0.33cm}|}{44~~~45}&&
411&~35&~35&~35&232&232&232&232&\multicolumn{2}{p{0.43cm}|}{~11~~~~411}\\
\cline{1-10}\cline{12-21}
\end{tabular}\\
\end{center}
\caption{Minimum distance for codes $C(m,n)^\perp$ when
$m=81, \ldots , 90$ and $n=0,\ldots, 9.$ } \label{tab:second}
\end{table}

%\begin{remark}
%By observing the minimum distance it seems more natural to consider
%our coordinate system for $G \geq K+P_\infty+P_0.$
%\end{remark}
\end{example}

\newpage
\bibliographystyle{plain}
\bibliography{Hermitianbib}

\begin{thebibliography}{10}

\bibitem{Beelen}
Peter Beelen.
\newblock The order bound for general algebraic geometric codes.
\newblock {\em Finite Fields Appl.}, 13(3):665--680, 2007.

\bibitem{norder}
C{\'{\i}}cero Carvalho, Carlos Munuera, Ercilio da~Silva, and Fernando Torres.
\newblock Near orders and codes.
\newblock {\em IEEE Trans. Inform. Theory}, 53(5):1919--1924, 2007.

\bibitem{Majoritycoset}
Iwan~M. Duursma.
\newblock Majority coset decoding.
\newblock {\em IEEE Trans. Inform. Theory}, 39(3):1067--1070, 1993.

\bibitem{FengRao}
Gui~Liang Feng and T.~R.~N. Rao.
\newblock Decoding algebraic-geometric codes up to the designed minimum
  distance.
\newblock {\em IEEE Trans. Inform. Theory}, 39(1):37--45, 1993.

\bibitem{HommaKim4}
Masaaki Homma and Seon~Jeong Kim.
\newblock Toward the determination of the minimum distance of two-point codes
  on a {H}ermitian curve.
\newblock {\em Des. Codes Cryptogr.}, 37(1):111--132, 2005.

\bibitem{HommaKim1}
Masaaki Homma and Seon~Jeong Kim.
\newblock The complete determination of the minimum distance of two-point codes
  on a {H}ermitian curve.
\newblock {\em Des. Codes Cryptogr.}, 40(1):5--24, 2006.

\bibitem{HommaKim3}
Masaaki Homma and Seon~Jeong Kim.
\newblock The two-point codes on a {H}ermitian curve with the designed minimum
  distance.
\newblock {\em Des. Codes Cryptogr.}, 38(1):55--81, 2006.

\bibitem{HommaKim2}
Masaaki Homma and Seon~Jeong Kim.
\newblock The two-point codes with the designed distance on a {H}ermitian curve
  in even characteristic.
\newblock {\em Des. Codes Cryptogr.}, 39(3):375--386, 2006.

\bibitem{kirfelPellikaan}
Christoph Kirfel and Ruud Pellikaan.
\newblock The minimum distance of codes in an array coming from telescopic
  semigroups.
\newblock {\em IEEE Trans. Inform. Theory}, 41(6, part 1):1720--1732, 1995.
\newblock Special issue on algebraic geometry codes.

\bibitem{Pellikaan}
Ruud Pellikaan.
\newblock The shift bound for cyclic, {R}eed-{M}uller and geometric {G}oppa
  codes.
\newblock In {\em Arithmetic, geometry and coding theory ({L}uminy, 1993)},
  pages 155--174. de Gruyter, Berlin, 1996.

\bibitem{vanlintwilson}
Jacobus~H. van Lint and Richard~M. Wilson.
\newblock On the minimum distance of cyclic codes.
\newblock {\em IEEE Trans. Inform. Theory}, 32(1):23--40, 1986.

\bibitem{KumarYang}
Kyeongcheol Yang and P.~Vijay Kumar.
\newblock On the true minimum distance of {H}ermitian codes.
\newblock In {\em Coding theory and algebraic geometry (Luminy, 1991)}, volume
  1518 of {\em Lecture Notes in Math.}, pages 99--107. Springer, Berlin, 1992.

\end{thebibliography}

\end{document}